\newtheorem{theorem}{Theorem}[section]
\newtheorem{corollary}[theorem]{Corollary}
 \newtheorem{lemma}[theorem]{Lemma}
 \theoremstyle{definition}
 \theoremstyle{remark}
 \newtheorem{remark}[theorem]{Remark}
\DeclareRobustCommand{\smalltree}{\ \ \begin{picture}(0,0)(0,-5)
\qbezier(0,-3)(2,-1)(3,0)
\qbezier(-3,0)(-2,-1)(0,-3)
\qbezier(0,-3)(0,-3)(0,-6)
\end{picture}\ \ }
\def\bracket{Q}
\def\Aut{\operatorname{Aut}}
\newcommand{\bz}{\mathbb Z}
\def\ad{{\rm ad}}
\def\id{{\rm id}}
    \newcommand{\lasu}{{\mathfrak{L}}}
    \newcommand{\cotimes}{{\widehat{\otimes}}}
    \newcommand{\asu}{{\mathscr{A}}}
    \newcommand{\Qfuntor}{\mathcal{L}}
 \newcommand{\lib }{\mathbb{L}}
\newcommand{\cdgl}{\operatorname{{\text{\rm cDGL}}}}
\newcommand{\Hom}{\operatorname{\text{\rm Hom}}}
 \newcommand{\MC}{\operatorname{{\rm MC}}}
\newcommand{\mc}{{\MC}}
\newcommand{\bk}{\mathbb K}
 \def\Aut{\operatorname{Aut}}
   \newcommand{\libc}{{\widehat\lib}}
\begin{document}

\title{The infinity Quillen functor, Maurer-Cartan elements and DGL realizations}
\author{Urtzi Buijs, Yves F\'elix, Aniceto Murillo and Daniel Tanr\'e\footnote{The  authors have been partially supported by the MINECO grants MTM2013-41768-P and MTM2016-78647-P. The first author has also been partially supported by  the Ram\'on y Cajal MINECO program.  The first and third authors have also been partially supported by the Junta de Andaluc{\'\i}a grant FQM-213. \vskip 1pt {\em 2010 Mathematics Subject
Classification}. Primary: 55P62, 17B55; Secondary: 55U10.\vskip
 1pt
 {\em Key words and phrases}: Realization of Lie algebras. Cosimplicial Lie algebras. Maurer-Cartan elements. Quillen functor. Rational homotopy theory}}

\maketitle

\begin{abstract}
We show an alternative construction of the cosimplicial free complete diferential graded Lie algebra $\lasu_\bullet=\libc(s^{-1}\Delta^\bullet)$ based on a new Lie bracket formulae for Lie polynomials on a general tensor algebra.  Based on it,we prove that for any complete differential graded Lie algebra $L$, its geometrical realization $\langle L\rangle=\Hom_{\cdgl}(\lasu_\bullet,L)$ is isomorphic to its nerve $\gamma_\bullet(L)$, a deformation retract of  the Getzler-Hinich realization $\mc(\asu_\bullet\cotimes L)$.
\end{abstract}

\section*{Introduction}
In \cite{pri} we introduced a cosimplicial complete differential graded Lie algebra (cDGL henceforth) $\lasu_{\bullet}$ by extending the Lawrence-Sullivan interval \cite{lawsu} to any simplex: for each $n\ge 1$, $\lasu_n=\libc(s^{-1}\Delta^n)$ is the free cDGL  in which $s^{-1}\Delta^n$ together with the linear part of the differential is the (desuspension) of the simplicial chain complex of the standard $n$-simplex, and the vertices correspond to  Maurer-Cartan elements. The existence of such a cosimplicial cDGL satisfying the above conditions can be deduced in different ways. In \cite[Theorem 3.3 and 3.4]{pri} it was constructed by an inductive procedure. In \cite[Theorem 7.3 (i)]{pri} and \cite[\S 5.2]{ni1} it was shown how to construct it using the homotopy transfer theorem and Lie coalgebras. Nevertheless, all of them are isomorphic thanks to the unicity theorem \cite[Theorem 2.8]{pri}.

In this paper we give an alternative construction of $\lasu_\bullet$ based on a new Lie bracket formulae for Lie polynomials on a general tensor algebra (see Theorem \ref{Liepolynomial}), in the spirit of \cite[Chapter 1]{reu}. This formula allows us to extend the classical Quillen functor $\Qfuntor $ to $C_\infty$-coalgebras (see Theorem \ref{Quillenconstruction}).

The cosimplicial cDGL $\lasu_\bullet$ let us geometrically realize any cDGL $L$ as the simplicial set
$$\langle L\rangle=\Hom_{\cdgl}(\lasu_\bullet,L).$$
We also proved \cite[\S8]{pri} that under usual bounding and finite type assumptions this simplicial set is homotopy equivalent to any other known realization of a cDGL $L$, i.e., the $\lambda$-Quillen functor on $L$ \cite{qui}, the realization of the Chevalley-Eilenberg cochain functor on $L$ \cite{bousgu}, and the Getzler-Hinich simplicial functor on $L$ \cite{getz,hi}. The latter, carefully studied  also in \cite{ber}, is defined as the simplicial set of Maurer-Cartan elements of the simplicial DGL $\asu_\bullet\cotimes L$ in which $\asu_\bullet$ denotes the simplicial commutative differential graded algebra  of PL-differential forms on the standard simplices \cite{du,su} and $
A\cotimes L=\varprojlim_n (A\otimes L/L^n)$ for any commutative differential graded algebra (CDGA henceforth). In this paper, we show that, with full generality, this is homotopy equivalent to our realization:

\begin{theorem}\label{principal}  For any $\cdgl$ $L$ there are explicit homotopy equivalences
$$
\xymatrix{  \mc(\asu_\bullet\cotimes L)
\ar@<0.75ex>[r]^-p & \langle L\rangle \ar@<0.75ex>[l]^-i }
$$
which make  $\langle L\rangle$ a strong homotopy retract of $\mc(\asu_\bullet\cotimes L)$.
\end{theorem}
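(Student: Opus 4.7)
The plan is to build the maps $i$ and $p$ from the Dupont simplicial contraction of $\asu_\bullet$ onto the simplicial cochains of $\Delta^\bullet$, combined with the Koszul-type duality between $C^*(\Delta^n)$ and $\lasu_n=\libc(s^{-1}\Delta^n)$ that is implicit in the new construction of Theorems~\ref{Liepolynomial} and~\ref{Quillenconstruction}. The starting point is the existence of a universal Maurer--Cartan element $\alpha_n\in\asu_n\cotimes\lasu_n$ that corresponds to the identity of $\lasu_n$ under the identification $\langle\lasu_n\rangle_n=\Hom_\cdgl(\lasu_n,\lasu_n)$; pushing $\alpha_n$ forward along a cDGL morphism $\phi:\lasu_n\to L$ yields an MC element of $\asu_n\cotimes L$, and one defines
$$i_n(\phi)=(\id\cotimes\phi)(\alpha_n).$$
Simplicial compatibility in $n$ is then formal from the cosimplicial structure of $\lasu_\bullet$ and the simplicial structure of $\asu_\bullet$.

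For $p$, one invokes Dupont's explicit simplicial contraction $(\pi,\iota,h):\asu_\bullet\rightleftarrows C^*(\Delta^\bullet)$, satisfying the side conditions $h^2=0$, $\pi h=0$, $h\iota=0$. Via the Koszul/Quillen duality embodied in the extended functor $\Qfuntor$ of Theorem~\ref{Quillenconstruction}, twisting cochains $C^*(\Delta^n)\to L$ correspond bijectively to cDGL morphisms $\lasu_n\to L$, hence to elements of $\langle L\rangle_n$. Given $z\in\mc_n(\asu_\bullet\cotimes L)$, the nonlinear ``gauge flow'' driven by $h\cotimes\id_L$ transports $z$ to the unique gauge-equivalent MC element $\widetilde z$ lying in $(\iota\cotimes\id_L)(C^*(\Delta^n)\cotimes L)$, and $p_n(z)$ is declared to be the corresponding cDGL morphism. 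The identity $p\circ i=\id$ reduces to the assertion that $\alpha_n$ is already Dupont-normal, which is forced by the construction of $\lasu_n$ as the $\Qfuntor$-image of $C^*(\Delta^n)$, together with naturality in the cDGL variable.

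The homotopy $i\circ p\simeq\id$ is precisely the gauge transformation that drives $z$ to $\widetilde z$; encoded by the extension of $h$ to $\asu_\bullet\cotimes L$, it assembles into a simplicial homotopy, and the Dupont side conditions $h^2=0$, $\pi h=0$, $h\iota=0$ propagate to the additional identities characterizing a \emph{strong} (not merely weak) homotopy retract. The main obstacle I anticipate is that the gauge flow is defined as an infinite series of iterated brackets and applications of $h$; both its convergence and its naturality rely on the completeness of $L$ together with the explicit Lie-polynomial identities of Theorem~\ref{Liepolynomial}, which are what ultimately enable one to check that $p(z)$ is a strict cDGL morphism (not just an $L_\infty$-morphism) and that the retract side conditions hold on the nose. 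Packaging these bracket identities into a closed-form gauge transformation, and matching the resulting homotopy with the one induced by $h$, is the technical heart of the argument.
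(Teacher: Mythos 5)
Your overall strategy---identify $\langle L\rangle$ with MC elements over the simplicial cochains (equivalently the nerve) and retract $\mc(\asu_\bullet\cotimes L)$ onto it by Dupont's contraction---is viable in principle (it is essentially the route of \cite{getz2,Bandiera} and of Robert-Nicoud \cite{ni1}), but as written two essential points are missing rather than proven. First, your definition of $i$ is circular: you take $\alpha_n\in\mc(\asu_n\cotimes\lasu_n)$ to be ``the element corresponding to $\id_{\lasu_n}$ under the identification $\langle\lasu_n\rangle_n=\Hom_{\cdgl}(\lasu_n,\lasu_n)$'', but realizing a $\cdgl$ morphism as an MC element of $\asu_n\cotimes\lasu_n$ is exactly the map $i$ you are constructing. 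To break the circle you must either write $\alpha_n$ down explicitly and verify the Maurer--Cartan equation (nontrivial: $\iota\otimes\id$ is a chain map but not a DGL map, so the naive candidate built from the Whitney forms requires the full structure equations of $\lasu_n$), or use the homotopy transfer theorem on the Dupont retract to get an $L_\infty$ quasi-isomorphism $I_\bullet\colon C^*(\Delta^\bullet)\otimes L\to \asu_\bullet\cotimes L$ and set $i=\mc(I_\bullet)$, after identifying $\mc(C^*(\Delta^\bullet)\otimes L)\cong\langle L\rangle$; that identification is precisely Theorem \ref{main2} (Corollary \ref{equivalencia}), which you never invoke but cannot do without. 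Second, everything that makes the statement true---convergence and simpliciality of the gauge flow driven by $K_\bullet\cotimes\id_L$, the fact that it terminates at a unique normal form, that $p$ so defined satisfies $pi=\id$, and that the flow assembles into a \emph{strong} deformation retraction---is asserted, not established; you yourself flag it as the technical heart. There is also a precision error: the normal forms are the MC elements annihilated by $K_\bullet\cotimes\id_L$ (the nerve $\gamma_\bullet(L)$), not elements of the image of $\iota\otimes\id_L$; these subspaces differ, and identifying the nerve with $\mc(C^*(\Delta^\bullet)\otimes L)$ carrying the transferred $L_\infty$ structure is itself a theorem (\cite{getz2}, \cite[Prop. 5.2.7]{Bandiera}), used in the paper only later, for Theorem \ref{final}.

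For contrast, the paper's proof avoids the gauge flow entirely. It transfers along the Dupont retract tensored with $L$ to obtain $L_\infty$ quasi-isomorphisms $I_\bullet,P_\bullet$, identifies $\mc(C^*(\Delta^\bullet)\otimes L)\cong\langle L\rangle$ via Theorem \ref{main2}, gets $\mc(P_\bullet)\mc(I_\bullet)=\id$ for free from $P_\bullet I_\bullet=\id$, and then shows $\mc(I_\bullet)$ is a weak equivalence of Kan complexes by a componentwise computation: $\pi_0$ of both sides is $\widetilde{\mc}(L)$, and for each component $z$ both $\pi_n\langle L^{(z)}\rangle$ and $\pi_n\mc(\asu_\bullet\cotimes L^{(z)})$ are identified with $H_{n-1}(L^{(z)})$ using \cite{pri} and \cite{ber}; finally the explicit Taylor series of $I_n$ collapses on $\alpha_{0\dots n}\otimes\Phi$ because all products of $\omega_{0\dots n}$ with itself vanish, so the two sets of generators correspond. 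If you want to keep your gauge-flow route, you must supply the Getzler--Bandiera normalization theorem (or cite it) and a non-circular construction of the universal MC element; otherwise the proposal does not yet constitute a proof.
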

As a consequence we obtain (see Theorem \ref{final}):

 \begin{theorem}\label{corolario}  The realization $\langle L\rangle$ of any cDGL is isomorphic to $\gamma_\bullet(L)$ as simplicial sets.
  \end{theorem}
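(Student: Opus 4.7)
The plan is to leverage Theorem \ref{principal} and pin down the image of the inclusion $i$ inside $\mc(\asu_\bullet \cotimes L)$. Let $p: \mc(\asu_\bullet \cotimes L) \to \langle L \rangle$ and $i: \langle L \rangle \to \mc(\asu_\bullet \cotimes L)$ be the explicit simplicial maps provided by Theorem \ref{principal}, which satisfy $p \circ i = \id_{\langle L \rangle}$. Thus $i$ realizes $\langle L \rangle$ as a simplicial subset of $\mc(\asu_\bullet \cotimes L)$. The nerve $\gamma_\bullet(L)$ is also a simplicial subset, cut out by the Dupont gauge condition, and the strategy is to identify these two subsets.

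The first key step is to show that $i(\langle L \rangle) \subseteq \gamma_\bullet(L)$. I would analyze the explicit form of $i$ constructed in the proof of Theorem \ref{principal} and verify that, for any $\phi \in \Hom_{\cdgl}(\lasu_n, L)$, the Maurer-Cartan element $i(\phi) \in \mc(\asu_n \cotimes L)$ is built by pairing the images $\phi(s^{-1}\sigma)$ of the generators of $s^{-1}\Delta^n$ against Dupont forms on the standard $n$-simplex. Such a pairing automatically satisfies the Dupont gauge, so $i(\phi)$ lies in $\gamma_n(L)$.

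For surjectivity onto $\gamma_\bullet(L)$, take any $\alpha \in \gamma_n(L)$ and consider $i(p(\alpha))$. By the previous step this element also lies in $\gamma_n(L)$. Since $p$ is the simplicial projection underlying the deformation retraction from $\mc(\asu_\bullet \cotimes L)$ onto its Dupont retract, the two elements $\alpha$ and $i(p(\alpha))$ are gauge equivalent in $\mc(\asu_n \cotimes L)$; but the Dupont gauge condition singles out a unique representative per gauge class, forcing $i(p(\alpha)) = \alpha$. Hence $i: \langle L \rangle \to \gamma_\bullet(L)$ is a bijection with inverse $p|_{\gamma_\bullet(L)}$, and simplicial naturality is automatic because $p$ and $i$ are themselves simplicial maps.

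The main obstacle will be the verification in the first step: matching the explicit combinatorics of $i$, which uses the free $\cdgl$ structure of $\lasu_n$ as described by the new Lie bracket formula of Theorem \ref{Liepolynomial}, against the Dupont decomposition of $\asu_n$. This is precisely where the particular construction of $\lasu_\bullet$ developed earlier in the paper interacts essentially with the structure of $\asu_\bullet$; once this simplexwise compatibility is established, the remaining cosimplicial naturality of the identification follows from the naturality of both $\lasu_\bullet$ and the Dupont forms with respect to simplicial maps $\Delta^m \to \Delta^n$.
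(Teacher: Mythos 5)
There is a genuine gap in your surjectivity step. The claim that ``the Dupont gauge condition singles out a unique representative per gauge class'' is false, and it is exactly what your argument hinges on when you conclude $i(p(\alpha))=\alpha$ for $\alpha\in\gamma_n(L)$. The nerve $\gamma_\bullet(L)$ is a simplicial set homotopy equivalent to $\mc(\asu_\bullet\cotimes L)$, not a set of chosen gauge representatives: already for an abelian $L$ with zero differential one has $\gamma_1(L)\cong L_{-1}\oplus L_0$ while the gauge classes in $\mc(\asu_1\cotimes L)$ form only $L_{-1}$, so many distinct elements of $\gamma_n(L)$ lie in one gauge class. (Your first step is essentially fine in spirit, but as stated it is also too quick: $\mc(I_n)(x)=\sum_k\frac{1}{k!}I_n^{(k)}(x,\dots,x)$ is not just a pairing of generators with Whitney forms; that its image is killed by $K_\bullet\cotimes\id_L$ uses the side conditions $Ki=0$ and $K^2=0$ of the Dupont contraction, since all higher Taylor terms are of the form $K[\cdot,\cdot]$. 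Likewise, ``$\alpha$ and $i(p(\alpha))$ are gauge equivalent'' needs an argument, not just the existence of a chain homotopy.)

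What actually closes the gap is the statement that, for a contraction with side conditions, $\mc(P_\bullet)$ and $\mc(I_\bullet)$ restrict to mutually inverse bijections between $\{\alpha\in\mc(\asu_\bullet\cotimes L):(K_\bullet\cotimes\id_L)\alpha=0\}=\gamma_\bullet(L)$ and the Maurer--Cartan set of the transferred structure on $C^*(\Delta^\bullet)\otimes L$. This is not a gauge-uniqueness fact; it is proved by a fixed-point/filtration argument: for $\alpha$ in the nerve, $\id-ip=dK+Kd$ together with the Maurer--Cartan equation gives $\alpha=ip(\alpha)-\tfrac12K[\alpha,\alpha]+\cdots$, and completeness forces $\alpha$ to be the unique solution with prescribed $p(\alpha)$. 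The paper does not reprove this; it quotes it (Getzler, \cite[Theorem 1]{getz2}, or Bandiera, \cite[Proposition 5.2.7]{Bandiera}) to get $\gamma_\bullet(L)\cong\mc(C^*(\Delta^\bullet)\otimes L)$, and then combines it with Corollary \ref{equivalencia}, i.e.\ with the identification $\mc(C^*(\Delta^\bullet)\otimes L)\cong\langle L\rangle$ coming from Theorem \ref{main2}. So your overall strategy (identify $\langle L\rangle$ with the image of $i$ inside the nerve) can be made to work, but you must replace the gauge-uniqueness claim by this perturbation-lemma bijection, either by citing it or by carrying out the fixed-point argument yourself.
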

  The  {\em nerve} $\gamma_\bullet(L)$  of $L$, introduced  in \cite[\S5]{getz},  is a subsimplicial set of $\mc(\asu_\bullet\cotimes L)$ homotopy equivalent to it.

 To prove these results we need to add  two new entries to the so called ``Rosetta Stone'' of higher structures \cite[\S10.1.9]{LV}: any homotopy retract (see next section for precise definitions)
$$
\xymatrix{ \ar@(ul,dl)@<-5.6ex>[]  & C
\ar@<0.75ex>[r] & V \ar@<0.75ex>[l] }
$$
of the cocommutative differential graded coalgebra (CDGC) $C$, in which $V$ is finite type and bounded below, produces a free $\cdgl$ $\Qfuntor(V)=\libc(s^{-1}V)= \varprojlim_n \lib(s^{-1}V)/\lib^{>n}(s^{-1}V)$ and  a homotopy retract
$$
\xymatrix{ \ar@(ul,dl)@<-5.6ex>[]  & \Hom(C,L)
\ar@<0.75ex>[r] & \Hom(V,L) \ar@<0.75ex>[l] }
$$
of the convolution Lie algebra $\Hom(C,L)$ for any DGL $L$. This induces an $L_\infty$-algebra structure on $\Hom(V,L)$ for which the  following holds (see Theorem \ref{main}).

\begin{theorem}\label{main1intro} For any $\cdgl$ $L$, there is a bijection
$$
\mc\Hom(V,L)\cong{\Hom_{\cdgl}}(\Qfuntor(V),L).
$$
\end{theorem}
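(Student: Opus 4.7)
The plan is to exploit the freeness of $\Qfuntor(V)=\libc(s^{-1}V)$ as a complete graded Lie algebra. Forgetting differentials, a cDGL morphism $f\colon\libc(s^{-1}V)\to L$ is uniquely determined by its restriction $f|_{s^{-1}V}\colon s^{-1}V\to L$, a degree $0$ linear map which I rewrite via desuspension as a degree $-1$ element $\alpha\in\Hom(V,L)$. This gives a bijection of sets between graded-Lie morphisms (ignoring differentials) and $\Hom(V,L)^{-1}$, so the theorem reduces to showing that commutation of $f$ with the differentials is equivalent to $\alpha$ satisfying the Maurer-Cartan equation in the transferred $L_\infty$-structure on $\Hom(V,L)$.

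To translate the differential condition, I would first unpack the differential on $\Qfuntor(V)$ using Theorem \ref{Quillenconstruction}: it is the unique complete derivation whose value on a generator $s^{-1}v$ is the Lie polynomial prescribed by the transferred $C_\infty$-coalgebra structure maps $\{\Delta_k\colon V\to V^{\otimes k}\}_{k\ge 1}$ on $V$, with combinatorics controlled by Theorem \ref{Liepolynomial}. Writing $\Delta_k(v)=\sum v_1\otimes\cdots\otimes v_k$, schematically
$$
d(s^{-1}v)=\pm s^{-1}(\Delta_1 v)+\sum_{k\ge 2}\pm\tfrac{1}{k!}\bigl[s^{-1}v_1,[s^{-1}v_2,\dots,s^{-1}v_k]\cdots\bigr].
$$
Evaluating $f\circ d=d_L\circ f$ on a generator $s^{-1}v$ produces, on one side, $d_L(\alpha(v))$ (up to a sign from $s^{-1}$) and, on the other, a sum of iterated brackets $[\alpha(v_1),[\alpha(v_2),\dots]\cdots]$ in $L$. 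Since the transferred $L_\infty$-brackets $\ell_k$ on $\Hom(V,L)$ are by construction the convolutions of the $\Delta_k$ with the Lie bracket of $L$, this sum equals $\sum_{k\ge 1}\tfrac{1}{k!}\ell_k(\alpha,\dots,\alpha)(v)$. The generator-level equation is thus exactly the Maurer-Cartan equation for $\alpha$.

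To conclude the bijection, the forward map sends $f$ to $\alpha=f\circ s^{-1}$, which is Maurer-Cartan by the previous paragraph. Conversely, given $\alpha\in\mc\Hom(V,L)$, the corresponding degree $0$ map $s^{-1}V\to L$ extends uniquely, by freeness, to a morphism $f\colon\libc(s^{-1}V)\to L$ of complete graded Lie algebras; completeness of $L$ guarantees convergence of the infinite bracket sums that appear. The identity $f\circ d=d_L\circ f$ then holds on generators by the MC equation, and since both sides are derivations with respect to the Lie bracket of $\libc(s^{-1}V)$, the identity propagates to all of $\Qfuntor(V)$.

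The main obstacle is the bookkeeping in the middle step: carefully matching signs and combinatorial coefficients between the Lie-polynomial expansion of $d$ on $\libc(s^{-1}V)$ and the transferred $L_\infty$-brackets on $\Hom(V,L)$. This requires tracking the desuspension $s^{-1}$ and the Koszul signs coming from antisymmetrising the cocommutative coproducts $\Delta_k$ into iterated Lie brackets. Fortunately, Theorems \ref{Liepolynomial} and \ref{Quillenconstruction} are tailored precisely to make both sides of this identification explicit, so the comparison becomes essentially routine once the sign conventions are fixed.
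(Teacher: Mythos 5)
Your outline follows the same route as the paper's proof of Theorem \ref{main}: use freeness of $\Qfuntor(V)=\libc(s^{-1}V)$ to identify a cDGL morphism with its restriction $fs^{-1}\in\Hom_{-1}(V,L)$, and then show that compatibility with the differentials on generators is the Maurer--Cartan equation, which in the paper comes down to the identity $fd_ks^{-1}=-\frac{1}{k!}\ell_k(fs^{-1},\dots,fs^{-1})$ for $k\ge 2$. The one step you assert rather than prove is the sentence that the transferred brackets $\ell_k$ on $\Hom(V,L)$ are ``by construction'' the convolutions of the $\Delta_k$ with the bracket of $L$; this is not by construction, and it is precisely where the content of the proof lives. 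The $\ell_k$ are transferred from the convolution DGL $\Hom(C,L)$ through the retract $(i^*,p^*,K^*)$, so their tree formulas (\ref{linfinito}) are built from $K$, the coproduct of $C$, a sum over \emph{non-planar} trees weighted by $1/|\Aut T|$, and the symmetrization map, whereas the $\Delta_k$ in (\ref{cinfinito}) are unweighted sums over \emph{planar} trees. Reconciling the two is exactly the paper's Lemma \ref{lemalinfinito} (tree by tree, $\widetilde\ell_T=\gamma\circ Q_T\circ(-\otimes\cdots\otimes-)\circ\Delta_T$) combined with Theorem \ref{Liepolynomial} applied to $\psi=fs^{-1}p$ and $\varphi=(K\otimes K)\circ\Delta$, which converts $\sum_{T\in\mathscr{T}_k}\frac{1}{|\Aut T|}\,\gamma\circ Q_T\circ(fs^{-1})^{\otimes k}\circ\Delta_T$ into $\sum_{T\in\mathscr{PT}_k}\gamma\circ(fs^{-1})^{\otimes k}\circ\Delta_T$, and the latter equals $-fd_ks^{-1}$ by (\ref{diferencial}) and (\ref{cinfinito}). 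Since your closing paragraph does flag this coefficient-matching as the remaining work and names Theorems \ref{Liepolynomial} and \ref{Quillenconstruction} as the tools that carry it out, the plan is sound and coincides with the paper's; but as written the ``by construction'' claim hides the only nontrivial step, and your schematic formula for $d(s^{-1}v)$ with uniform $1/k!$ coefficients and left-nested brackets should likewise be replaced by the tree-shaped expression with $1/|\Aut T|$ coefficients that Theorem \ref{Liepolynomial} actually produces.
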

The dual result is also attained although finite type requirements are essential: a homotopy retract as above, in which now $C$ is a commutative differential graded algebra  and $V$ is of finite type, also provides a free cDGL $\lasu(V^\sharp)$ and a homotopy retract
$$
\xymatrix{ \ar@(ul,dl)@<-5.6ex>[]  & C\cotimes L
\ar@<0.75ex>[r] & V\cotimes L=V\otimes L \ar@<0.75ex>[l] }
$$
of the  Lie algebra $C\cotimes L$ for any cDGL $L$. This induces an $L_\infty$-algebra structure on $V\otimes L$ which is naturally isomorphic to $\Hom(V^\sharp,L)$. Then (see Theorem \ref{main2}):

\begin{theorem}\label{main2intro} For any $\cdgl$ $L$ there is a bijection
$$
\mc(V\otimes L)\cong{\Hom_{\cdgl}}(\Qfuntor(V^\sharp),L).
$$
\end{theorem}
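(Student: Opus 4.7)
The plan is to reduce Theorem \ref{main2intro} to Theorem \ref{main1intro} by linear duality, with the finite type hypothesis on $V$ playing an essential role.

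First, I would dualize the given data. Since $V$ has finite type, $V^\sharp$ is again a chain complex (of finite type, and bounded below after reindexing) and $C^\sharp$ inherits the structure of a CDGC. The contraction data — projection $C\to V$, inclusion $V\to C$, and chain homotopy on $C$ — dualize to a homotopy retract in the CDGC setting that satisfies the hypotheses of Theorem \ref{main1intro}. Applying that theorem to $V^\sharp$ then yields
$$
\mc\Hom(V^\sharp,L)\;\cong\;\Hom_{\cdgl}(\Qfuntor(V^\sharp),L).
$$

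The remaining step is to identify $V\otimes L$ with $\Hom(V^\sharp,L)$ as $L_\infty$-algebras. On the level of graded vector spaces, the finite type hypothesis gives the canonical isomorphism $V\otimes L\xrightarrow{\cong}\Hom(V^\sharp,L)$ defined by $v\otimes\ell\mapsto(\varphi\mapsto\pm\varphi(v)\ell)$, which is precisely the natural isomorphism asserted in the paragraph preceding the theorem. Under this isomorphism Maurer-Cartan sets correspond, so $\mc(V\otimes L)\cong\mc\Hom(V^\sharp,L)$, and composing with the bijection above yields the desired identification $\mc(V\otimes L)\cong\Hom_{\cdgl}(\Qfuntor(V^\sharp),L)$.

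The main obstacle is verifying the compatibility of the two $L_\infty$-structures on $V\otimes L\cong\Hom(V^\sharp,L)$. One comes from applying homotopy transfer to the DGL $C\cotimes L$ using the contraction induced by the original CDGA retract; the other comes from transfer applied to the convolution DGL $\Hom(C^\sharp,L)$ using the dualized CDGC retract. Showing these agree reduces to checking that the canonical map $C\cotimes L\to\Hom(C^\sharp,L)$ is a DGL morphism intertwining the two contractions, compatibly with the completion defining $\cotimes$. This is where finite type of $V$ (ensuring the transferred structure factors through a genuine isomorphism on the retract) and the role of the pro-nilpotent filtration on $L$ come together; once this naturality is established, the bijection of the theorem follows formally from Theorem \ref{main1intro}, just as in the dual case.
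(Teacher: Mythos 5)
Your reduction hinges on dualizing the whole retract: you assert that $C^\sharp$ (the dual of the CDGA, written $A$ in Section 4 of the paper) inherits a CDGC structure and that the contraction dualizes to a homotopy retract of $C^\sharp$ onto $V^\sharp$, so that Theorem \ref{main1intro} can be applied to the convolution algebra $\Hom(C^\sharp,L)$. This is exactly where the argument breaks down: the hypotheses only require $V$ to be of finite type, not $C$, and in the application the theorem is designed for ($C=\asu_n$, the PL forms on the simplex) $C$ is infinite dimensional in each degree. The dual of the multiplication then lands in $(C\otimes C)^\sharp$, which strictly contains $C^\sharp\otimes C^\sharp$, so $C^\sharp$ carries no natural CDGC structure, the convolution DGL $\Hom(C^\sharp,L)$ is not defined, and there is no ``dualized CDGC retract'' to feed into Theorem \ref{main1intro}. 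For the same reason the canonical map $C\cotimes L\to \Hom(C^\sharp,L)$ that you invoke to compare the two transferred $L_\infty$-structures is not an isomorphism (nor part of a diagram of contractions), so the compatibility step cannot even be formulated. Your plan would be fine under the additional assumption that $C$ itself is of finite type, but that assumption is precisely what the theorem must avoid.

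The paper's proof dualizes only $V$ (this is where finite type of $V$ enters, see Remark \ref{remarco1}) and never touches $C^\sharp$: the $L_\infty$-structure transferred onto $V\otimes L$ from the retract of $A\cotimes L$ is transported through $\Psi\colon V\otimes L\cong\Hom(V^\sharp,L)$, Lemma \ref{lemaainfinito} gives $\Psi\bigl(\widetilde\ell_T(z,\dots,z)\bigr)=\gamma\circ Q_T\circ g^{\otimes k}\circ m_T^\sharp$, and then the argument of Theorem \ref{main} is repeated verbatim on the dual generators: Theorem \ref{Liepolynomial} converts the sum over non-planar trees weighted by $1/|\Aut T|$ into $\sum_{T\in\mathscr{PT}_k}\gamma\circ (fs^{-1})^{\otimes k}\circ m_T^\sharp$, which equals $-fd_ks^{-1}$ by the computation (\ref{util}) with $\Delta_T$ replaced by $m_T^\sharp$. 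So the correct route is a second direct verification parallel to Theorem \ref{main1intro}, not a formal dualization of it; to salvage your approach you would have to reprove Theorem \ref{main1intro} with the CDGC retract replaced by a bare $C_\infty$-coalgebra such as $V^\sharp$, and that is essentially the computation the paper carries out.
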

These results can be extended to the general case in which $C$ of the homotopy retract above is either a $C_\infty$-algebra or $C_\infty$-coalgebra. Nevertheless, for our purposes we only need them as stated.

Next section contains notation and recalls explicit descriptions of transferred structures. In Section \ref{Quillen} we prove Theorem \ref{Liepolynomial} and Theorem \ref{Quillenconstruction}. Section \ref{realization} contains Theorems \ref{principal} , \ref{corolario}, \ref{main1intro} and \ref{main2intro}.
\smallskip

 Theorems $\ref{principal},\ref{main1intro}$ and $\ref{main2intro}$  have been independently and simultaneously obtained by D. Robert-Nicoud in \cite{ni1,ni2} using different methods.

\section{Preliminaries} \label{Introduction}

With the aim of fixing
notation, we begin by recalling known facts and setting some generic assumptions on the algebraic structures of our concern.
Abusing notation, we will not distinguish a given category $\mathcal C$ from the class of its objects. Its morphism sets are denoted by $\Hom_{\mathcal C}$ and unadorned $\Hom$ denotes just linear maps.  The coefficient field for any algebraic object $\bk$ is assumed to be of characteristic zero. Any graded object is considered $\bz$-graded and under no finite type assumptions unless explicitly specified otherwise.

An \emph{$A_{\infty}$-algebra} is a graded vector space $A$ endowed with a family of linear maps,
 $$
 m_k\colon A^{\otimes k}\longrightarrow A,\quad, k\ge1,
 $$
 of degree $k-2$, such that for all $i\ge 1$,
 $$
\sum_{k=1}^i\, \sum_{n=0}^{i-k}(-1)^{k+n+kn}m_{i-k+1}({\rm id}^n\otimes m_k\otimes{\rm id}^{\otimes i-k-n})
=0.
$$
An $A_\infty$-algebra $A$ is {\em commutative}, or it is a {\em $C_\infty$-algebra} if, for each $k\ge2$, the $k$-th multiplication $m_k$ vanishes on the {\em shuffle products}, that is, $m_k\nu_k=0$, with
$$\nu_k\colon A^{\otimes k}\to A^{\otimes k},\quad \nu_k(a_1\otimes\dots\otimes a_k)=\sum_{i=1}^k\sum_{\sigma\in S(i,k-i)}\varepsilon_\sigma\, a_{\sigma(1)}\otimes\dots\otimes a_{\sigma^(k)},
$$
where $S(i,k-i)$ denotes
the set of $(i, k-i)$-shuffles, i.e., permutations $\sigma$
such that $\sigma^{-1}(1)<\cdots<\sigma^{-1}(i)$ and $\sigma^{-1}(i+1)<\cdots<\sigma^{-1}(k)$.

A {\em  differential graded algebra}, DGA henceforth, is an $A_\infty$-algebra for which $m_k=0$ for all $k\ge 3$. In this case $m_1=d$ and $m_2=m$ are the {\em differential} and {\em multiplication} respectively.

Dually, an \emph{$A_{\infty}$-coalgebra} is a graded vector space $C$ endowed with a family of linear maps,
 $$
 \Delta_k\colon C\longrightarrow C^{\otimes k},\quad, k\ge1,
 $$
 of degree $k-2$, such that for all $i\ge 1$,
 $$
\sum_{k=1}^i\, \sum_{n=0}^{i-k}(-1)^{k+n+kn}({\rm id}^{\otimes i-k-n}\otimes \Delta_k
\otimes {\rm id}^n)\Delta_{i-k+1}=0.
$$
An $A_{\infty}$-coalgebra $C$ is \emph{cocommutative}, or it is a {\em $C_\infty$-coalgebra} if, for each $k\ge 2$, the {\em unshuffle products} vanish on the image of the $k$-th comultiplication $\Delta_k$,  that is, $\tau\circ\Delta_k=0$ with
$$\tau\colon C^{\otimes k}\to C^{\otimes k},\quad
\tau(c_1\otimes\cdots\otimes c_k)=\sum_{i=1}^k\sum_{\sigma\in S(i, k-i)}
\varepsilon_{\sigma}\, c_{\sigma^{-1}(1)}\otimes\cdots\otimes  c_{\sigma^{-1}(k)}.
$$
A {\em  differential graded coalgebra}, DGC henceforth (CDGC if it is cocommutative), is an $A_\infty$-coalgebra for which $\Delta_k=0$ for all $k\ge 3$. In this case $\Delta_1=\delta$ and $\Delta_2=\Delta$ are the {\em codifferential} and {\em comultiplication} respectively.

An \emph{$L_{\infty}$-algebra} is a
graded vector space $L$ together with linear maps $\ell_k\colon
L^{\otimes k}\to L$ of degree $k-2$, for $k\ge 1$, satisfying:
\smallskip

(i) For any permutation $\sigma$ of $n$ elements, and any $n$-tuple $x_1,\dots,x_n$ of homogeneous elements of $L$,
$$
\ell_k(x_{\sigma(1)},\ldots, x_{\sigma(n)})=\varepsilon_{\sigma}\varepsilon\ell_k(x_1,\ldots, x_n),
$$
where $\varepsilon_{\sigma}$ is the signature of the permutation and
$\varepsilon$ is the sign given by the Koszul convention.
 \smallskip

 (ii) For any $n\ge 1$ and any $n$-tuple $x_1,\dots,x_n$ of homogeneous elements of $L$, the \emph{generalized Jacobi identity} holds,
$$
\sum_{i+j=n+1}\sum_{\sigma\in S(i, n-i)}\varepsilon_{\sigma}\varepsilon(-1)^{i(j-1)}
\ell_{n-i}(\ell_i(x_{\sigma(1)},...,x_{\sigma(i)}), x_{\sigma(i+1)},..., x_{\sigma(n)})=0,
$$
The set of {\em Maurer-Cartan elements} of an $L_\infty$-algebra $L$ is defined as
\begin{equation*}
\mc(L)=\{ x\in {L}_{-1}, \,\, \sum_{k\geq 0}\dfrac{1}{k!}\ell_k({x,\dots ,x})=0  \}.
\end{equation*}

A {\em differential graded Lie algebra}, DGL henceforth, is an $L_\infty$-algebra $L$ for which $\ell_k=0$ for all $k\ge 3$. In this case $\ell_1=\partial$ and $\ell_2=[\,\,,\,]$ are the {\em differential} and the {\em Lie bracket} respectively. In this case $\mc(L)=\{ x\in {L}_{-1} \ |\ \partial x=-\frac{1}{2}[x,x] \}$. A DGL $L$ is called \emph{free} if it is free as a Lie algebra, that is, $L = \mathbb L(V)$ for some graded vector space $V$. Recall that $\lib(V)\subset T(V)$ is the Lie algebra generated by commutators on $V$. A DGL $L$ is complete (cDGL henceforth) if $L=\varprojlim_n L/L^n$
where $L^1 = L$ and for $n\geq 2$, $L^n = [L, L^{n-1}]$.

Consider a diagram
$$
\xymatrix{ \ar@(ul,dl)@<-5.6ex>[]_K  & M
\ar@<0.75ex>[r]^-p & V \ar@<0.75ex>[l]^-i }
$$
where $i$ and $p$ are chain maps for which $pi=\id_V$ and $ip\simeq \id_M$ through the chain homotopy $K$. We encode this data as $(M,V,i,p,K)$ and call it a {\em homotopy retract}.

For such a retract, the
\emph{homotopy transfer theorem} \cite{Fuk03, KS00, KS01, LV, Mer99}, also known as the {\em homological perturbation lemma} \cite{GS86, GLS91, HK91, Kai83} permits to transfer any additional structure on $M$ to the corresponding infinity version on $V$:

\begin{theorem} \label{HTT} Given $(M,V,i,p,K)$ a homotopy retract in which $M$ is either a  DGA (resp. CDGA), DGC (resp. CDGC) or DGL,
there exists an  $A_\infty$-algebra (resp. $C_\infty$-algebra), $A_\infty$-coalgebra (resp. $C_\infty$-coalgebra) or $L_\infty$-algebra structure in $V$, unique up to isomorphism,
    and   quasi-isomorphisms in the corresponding infinity structure
$$
\xymatrix{ M
\ar@<0.75ex>[r]^-P & V \ar@<0.75ex>[l]^-I }
$$
 extending $i$ and $p$ and such that $PI=\id_V$.
\end{theorem}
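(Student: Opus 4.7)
The plan is to produce the transferred structure by explicit sums over rooted trees, following Kadeishvili for the $A_\infty$-case and Kontsevich--Soibelman for the $L_\infty$-case; equivalently, one can apply the classical homological perturbation lemma to the bar or cobar resolution of $M$. The input chain homotopy $K$ decorates internal edges, while $i$ and $p$ decorate leaves and root respectively.

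Concretely, for the DGA case, set $m_1=d_V$ (the differential induced by $d_M$ on $V$) and, for $k\geq 2$, define
$$
m_k \;=\; \sum_{T} \varepsilon_T\, p \circ T(m,\, i,\, K),
$$
where $T$ ranges over planar binary rooted trees with $k$ ordered leaves, each leaf being decorated by $i\colon V \to M$, each internal edge by $K$, each internal vertex by the multiplication $m$, and the root edge by $p$; the sign $\varepsilon_T$ is dictated by the Koszul rule. The $L_\infty$-case is analogous with non-planar binary trees whose internal vertices are decorated by the Lie bracket together with an antisymmetrization sign; the $A_\infty$- and $L_\infty$-coalgebra variants are obtained by dualizing all arrows, so that $i$ decorates the root and $p$ decorates the leaves. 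In the commutative (resp.\ cocommutative) case, the vanishing on shuffles (resp.\ unshuffles) follows from the symmetry of the decorations together with the $S_k$-action on the input operad.

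To verify the $\infty$-algebra axioms, one rewrites the quadratic expression $\sum \pm\, m_{i-k+1}(\id^{\otimes n} \otimes m_k \otimes \id^{\otimes i-k-n})$ as a sum over trees obtained by grafting two smaller binary trees along a newly created edge which carries a factor that reassembles into $dK+Kd$. Applying the key side identity $dK + Kd = ip - \id_M$ on that edge splits the sum into three parts: the $d$-boundary cancels via the (co)Leibniz rule and the compatibility of $i,p$ with the differentials; the $ip$-part recombines precisely with the grafted contributions from smaller trees; the $-\id_M$-part collapses the internal edge and reassembles into the tree formula already tabulated. The same mechanism handles the $L_\infty$-case (Jacobi replaces associativity) and the coalgebra duals.

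The $\infty$-quasi-isomorphisms $I$ and $P$ extending $i$ and $p$ are built by analogous tree formulas in which either the root edge ($I$) or a specified leaf ($P$) is replaced by the identity; the equality $PI=\id_V$ holds on the nose because the side condition $pi=\id_V$, together with the sign convention, forces pairwise cancellation of every tree with at least one internal vertex. Uniqueness of the transferred structure up to $\infty$-isomorphism is the standard obstruction-theoretic argument on the $\infty$-morphism operad, inductively adjusting higher components. The main obstacle is the sign and symmetrization bookkeeping in the $L_\infty$-setting, where non-planar trees carry antisymmetrization factors from the bracket; one resolves it by an induction using the graded Jacobi identity at each internal vertex, organized so that each putative failure of the $L_\infty$-relation is witnessed by a unique tree grafting and cancelled by $dK+Kd=ip-\id_M$.
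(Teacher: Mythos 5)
The paper itself offers no proof of Theorem \ref{HTT}: it is quoted from the homotopy transfer / homological perturbation literature \cite{Fuk03,KS00,KS01,LV,Mer99,GS86,GLS91,HK91,Kai83}, with the commutative case attributed to \cite[Theorem 12]{ChG}. Your outline (tree-wise formulas with $i$, $p$, $K$ decorating leaves, root and internal edges, and verification of the $\infty$-relations by grafting together with the identity $dK+Kd=ip-\id_M$) is exactly the standard route taken in those references, so in strategy you are reproducing the cited proofs rather than diverging from the paper.

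Two steps, however, are asserted rather than proved, and as written they do not hold. First, $PI=\id_V$ does not follow from $pi=\id_V$ alone: the component $(PI)_2=P_1\circ I_2+P_2\circ(I_1\otimes I_1)$ contains terms of the shape $p\circ K\circ m\circ(i\otimes i)$ and $p\circ m\circ(Ki\otimes i)$, which vanish only under the side (annihilation) conditions $pK=0$, $Ki=0$ (and $K^2=0$). The paper's notion of homotopy retract imposes none of these, so your claimed ``pairwise cancellation of every tree forced by $pi=\id_V$'' fails; you must either first replace $K$ by a homotopy satisfying the side conditions (the standard modification $K\mapsto(\id_M-ip)K(\id_M-ip)$, then a further correction to achieve $K^2=0$), or obtain $P$ and the strict identity $PI=\id_V$ from the perturbation lemma applied to the (co)bar resolutions, where it is part of the output. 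Second, the commutative and cocommutative cases are not a formal consequence of ``symmetry of the decorations'': the vanishing of the transferred $m_k$ on shuffle products (equivalently, of the unshuffles on the image of the transferred $\Delta_k$) is a genuine combinatorial theorem, precisely the content of Cheng--Getzler \cite[Theorem 12]{ChG} and its dual, and your sketch gives no argument for it. With these two points supplied, your outline matches the proofs the paper cites.
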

The general fact that CDGA's transfer to $C_\infty$-algebras  is proved in \cite[Theorem 12]{ChG}. A dual argument proves the analogue for commutative DGC's.

As we shall strongly use it, we describe in each case, the explicit description of the transferred structure in the above theorem. In what follows, for any $k\ge 2$, $\mathscr{PT}_k$ denote the set of isomorphism classes of planar rooted binary trees of $k$ leaves, while $\mathscr{T}_k$ consists of isomorphism classes of (non planar) rooted binary trees with $k$ leaves.

Let first $M=(C,\delta,\Delta)$ be a (commutative) DGC. For each $T\in\mathscr{PT}_k$, we define a linear map $\Delta_T\colon
V\to V^{\otimes k}$ as follows: label the root by $i$, each internal edge  by $K$, each internal vertex  by $\Delta$, and each leaf by $p$. Then, $\Delta_T$ is defined as the composition of the different labels moving up from the root to the leaves. For instance,
the tree $T\in\mathscr{PT}_4$
$$
\xymatrixcolsep{1pc}
\xymatrixrowsep{1pc}
\entrymodifiers={=<1pc>} \xymatrix{
*{p}\ar@{-}[dr] & *{} & *{p}\ar@{-}[dl] & *{} & *{p}\ar@{-}[dr] & & *{p} \ar@{-}[dl]\\
*{} & {\Delta} \ar@{-}[drr]|K & *{} & *{} & *{} & \Delta\ar@{-}[dll]|K & *{} \\
*{} & *{} & *{} & \Delta\ar@{-}[d] & *{} & *{} & *{} \\
*{} & *{} & *{} & *{i} & *{} & *{} & *{} \\
}
$$
yields the map $\Delta_T=p^{\otimes 4}\circ(\Delta\circ
K\otimes\Delta\circ K)\circ \Delta\circ i\colon V\to V^{\otimes4}
$. By Theorem \ref{HTT} the transferred (commutative) $A_\infty$-coalgebra structure   in $V$ is given by $\{\Delta_k\}_{k\ge 1}$, where $\Delta_1=d$ and, for $k\ge 2$,
\begin{equation}\label{cinfinito}
\Delta_k= \sum_{T\in \mathscr{PT}_k}\Delta_T.
\end{equation}
Dually, if $M=(A,d,m)$ is a (commutative) DGA ($m$ denotes its multiplication), for each $T\in\mathscr{PT}_k$, we define a linear map $m_T\colon
V^{\otimes k}\to V$ as follows: label the root by $p$, each internal edge  by $K$,  each internal vertex  by $m$, and each leaf by $i$. Then, $m_T$ is defined as the composition of the different labels moving down from the leaves to the root. Now, the same tree above
$$
\xymatrixcolsep{1pc}
\xymatrixrowsep{1pc}
\entrymodifiers={=<1pc>} \xymatrix{
*{i}\ar@{-}[dr] & *{} & *{i}\ar@{-}[dl] & *{} & *{i}\ar@{-}[dr] & & *{i} \ar@{-}[dl]\\
*{} & {m} \ar@{-}[drr]|K & *{} & *{} & *{} & m\ar@{-}[dll]|K & *{} \\
*{} & *{} & *{} & m\ar@{-}[d] & *{} & *{} & *{} \\
*{} & *{} & *{} & *{p} & *{} & *{} & *{} \\
}
$$
produces $m_T=p\circ m\circ (K\circ m\otimes K\circ m)\circ i^{\otimes 4}\colon V^{\otimes 4}\to V
$. The transferred (commutative) $A_\infty$-algebra structure   in $V$ provided by Theorem \ref{HTT} is given by $\{m_k\}_{k\ge 1}$, where $m_1=d$ and, for $k\ge 2$,
\begin{equation}\label{ainfinito}
m_k= \sum_{T\in \mathscr{PT}_k}m_T.
\end{equation}

The  case in which $M=(L,\partial,[\,\,,\,])$ is a DGL is slightly different. For each $T$ in $\mathscr{T}_k$ define
 a
linear map
$
\ell_T\colon V^{\otimes k}\longrightarrow V
$ as follows: choose a planar embedding of $T$, label the root by $p$,
 each internal edge  by $K$, each internal vertex  by $[\,\,,\,]$, and each leaf by $i$. Then,  the map $
\widetilde\ell_T\colon V^{\otimes k}\longrightarrow V
$  is defined as the composition of the different labels moving down from the leaves to the root. For instance the planar embedding
$$
\xymatrixcolsep{1pc}
\xymatrixrowsep{1pc}
\entrymodifiers={=<1pc>} \xymatrix{
*{i}\ar@{-}[dr] & *{} & *{i}\ar@{-}[dl] & *{} & *{i}\ar@{-}[dr] & & *{i} \ar@{-}[dl]\\
*{} & {[\,,]} \ar@{-}[drr]|K & *{} & *{} & *{} & [\,,]\ar@{-}[dll]|K & *{} \\
*{} & *{} & *{} & [\,,]\ar@{-}[d] & *{} & *{} & *{} \\
*{} & *{} & *{} & *{p} & *{} & *{} & *{} \\
}
$$
of the corresponding $T\in\mathscr{T}_4$
produces the map
$$
p\circ [\,,]\circ(K\circ [\,,]\otimes K\circ [\,,])\circ i^{\otimes 4}.
$$
Define
$$
\ell_T=\widetilde\ell_{{T}}\circ \mathcal{S}_k$$
 where
 $$
 \mathcal{S}_k\colon V^{\otimes k} \to V^{\otimes k},\quad \mathcal{S}_k( a_1\otimes\cdots\otimes a_k)= \sum_{\sigma\in S_k}\varepsilon_{\sigma}\varepsilon\, a_{\sigma(1)}\otimes\cdots\otimes a_{\sigma(k)},
$$
is the symmetrization map,
in which $\varepsilon_{\sigma}$
denotes the signature of the permutation and $\varepsilon$ is the sign
given by the Koszul convention. The map $\ell_T$ is independent of the chosen planar embedding and,  by Theorem \ref{HTT}, the transferred $L_\infty$-algebra structure   on $V$ is given by $\{\ell_k\}_{k\ge 1}$, where $\ell_1=d$ and, for $k\ge 2$,
\begin{equation}\label{linfinito}
\ell_k= \sum_{T\in \mathscr{T}_k}\frac{\ell_T}{|\Aut T|}
\end{equation}
where $\Aut T$ is the automorphism group of the tree $T$.

\section{The cosimplicial cDGL $\lasu_\bullet$ and the infinity Quillen functor} \label{Quillen}

This section is devoted to describe an alternative construction for the cosimplicial cDGL $\lasu_\bullet$. The approach followed relies on a Lie bracket formulae for Lie polynomials on a general tensor algebra, that also allows us to extend the
 Quillen $\Qfuntor$ functor (see \cite[\S 22]{FHT}) for $C_\infty$-coalgebras.

Let $\varphi\colon V\to V\otimes V$  be a linear map whose image is a Lie polynomial, i.e., $\text{Im}\varphi
 \subset \mathbb{L}^2(V)$ and let $T\in \mathscr{PT}_k$. Define a linear map
$$
\varphi_T\colon V\to V^{\otimes k}
$$
recursively as follows $\varphi_T\colon V\to V^{\otimes k}$ by $\varphi_|=\id_V$ for the trivial tree $|$, and if $T$ is of the form

\begin{equation}\label{arbol}
 \xymatrixcolsep{1pc}
\xymatrixrowsep{1pc}
\entrymodifiers={=<1pc>} \xymatrix{
T'\ar@{-}[rd]&&\ar@{-}[ld]T''\\
&*{}\ar@{-}[d]&\\
&&
}\end{equation}
denoted in the sequel by $T'\smalltree T''$,
$$
\varphi_T= \Bigl( \varphi_{T'}\otimes \varphi_{T''}\Bigr) \circ \varphi .
$$
In particular, we have $\varphi_{\smalltree}=\varphi$ with this notation.

Finally, given $\psi\colon V\to W$ a homogeneous linear map we set
\begin{equation}\label{polinomio}
P_T\colon V\to W^{\otimes k},\qquad P_T=\psi^{\otimes k}\circ\varphi_T.
\end{equation}
We also consider the  linear map
\begin{equation}\label{Q}
\bracket_T\colon W^{\otimes k}\to \lib^k(W)
 \end{equation}
 defined as the nested commutator bracket moving down from the leaves to the root of the tree $T$. For example, the tree
$$
\xymatrixcolsep{1pc}
\xymatrixrowsep{1pc}
\entrymodifiers={=<1pc>} \xymatrix{
*{}\ar@{-}[dr] & *{} & *{}\ar@{-}[dl] & *{} & *{}\ar@{-}[dr] & & *{} \ar@{-}[dl]\\
*{} & {[\,,\,]} \ar@{-}[drr] & *{} & *{} & *{} &[\,,\,]\ar@{-}[dll] & *{} \\
*{} & *{} & *{} & [\,,\,]\ar@{-}[d] & *{} & *{} & *{} \\
*{} & *{} & *{} & *{_{\stackrel{}{}}} & *{} & *{} & *{} \\
}
$$
produces
$
\bracket_T=[\,,\,]\circ([\,,\,]\otimes  [\,,\,])\colon W^{\otimes 4}\to \lib^4(W)
$.

Now, for each  $T\in \mathscr{PT}_k$ we consider the subset $\overline T\subset \mathscr{PT}_k$ consisting of all the planar embeddings of trees which are isomorphic to T as non planar trees. We prove:

\begin{theorem}\label{Liepolynomial} For each $T\in \mathscr{PT}_k$,
$$
\frac{1}{|\Aut T|}\bracket_{T}\circ P_{T} =\sum_{ S\in\overline T}P_S.
$$
\end{theorem}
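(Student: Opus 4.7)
The plan is to prove the identity by induction on the number of leaves $k$, using the recursive decomposition $T = T' \smalltree T''$. The base case $k=2$ has $T = \smalltree$, $\overline{T}=\{T\}$ and $|\Aut T|=2$; writing $\varphi(v) = \sum_j(a_j\otimes b_j - (-1)^{|a_j||b_j|}b_j\otimes a_j) \in \lib^2(V)$ and using the antisymmetry of the bracket in the tensor algebra, a direct computation gives $\bracket_T\circ P_T = 2 P_T$, which is the claimed equality for this $T$.

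For the inductive step, let $T = T' \smalltree T''$. The recursive definitions yield $P_T = (P_{T'} \otimes P_{T''}) \circ \varphi$ and $\bracket_T = [\,,\,] \circ (\bracket_{T'} \otimes \bracket_{T''})$, whence
\begin{equation*}
\bracket_T \circ P_T = [\,,\,] \circ (\bracket_{T'} P_{T'} \otimes \bracket_{T''} P_{T''}) \circ \varphi.
\end{equation*}
Applying the induction hypothesis to $T'$ and $T''$, this becomes $|\Aut T'|\cdot|\Aut T''|$ times
\begin{equation*}
\sum_{S' \in \overline{T'},\ S'' \in \overline{T''}} [\,,\,] \circ (P_{S'} \otimes P_{S''}) \circ \varphi.
\end{equation*}

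The heart of the argument is the identity
\begin{equation*}
[\,,\,] \circ (P_{S'} \otimes P_{S''}) \circ \varphi = P_{S' \smalltree S''} + P_{S'' \smalltree S'},
\end{equation*}
valid for any planar binary trees $S'$, $S''$. Its proof uses crucially that $\operatorname{Im}\varphi \subset \lib^2(V)$: substituting $\varphi(v) = \sum_j(a_j\otimes b_j - (-1)^{|a_j||b_j|} b_j\otimes a_j)$ into the left side and expanding with the bracket $[X,Y] = X\otimes Y - (-1)^{|X||Y|} Y \otimes X$ on the tensor algebra produces four terms that regroup precisely as $P_{S'\smalltree S''}(v) + P_{S''\smalltree S'}(v)$; the decisive feature is that the sign $(-1)^{|a_j||b_j|}$ built into $\varphi(v)$ conspires with the Koszul signs in the outer bracket so that all cross-terms match.

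Substituting the key identity, it remains to verify
\begin{equation*}
|\Aut T'|\cdot|\Aut T''| \sum_{S',S''}(P_{S' \smalltree S''} + P_{S''\smalltree S'}) = |\Aut T| \sum_{S \in \overline{T}} P_S.
\end{equation*}
A short case analysis concludes: when $T' \not\cong T''$ as non-planar trees, $|\Aut T| = |\Aut T'|\cdot|\Aut T''|$, and every planar embedding $S \in \overline T$ is of precisely one of the two forms $S'\smalltree S''$ or $S''\smalltree S'$, so the two sides agree; when $T' \cong T''$, there is an extra automorphism swapping the two halves, giving $|\Aut T| = 2|\Aut T'|\cdot|\Aut T''|$, and each $S \in \overline T$ is produced exactly twice on the left, so again the two sides agree. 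This completes the induction. The main obstacle is the careful sign bookkeeping in the key identity displayed above; once that is pinned down, the induction and the combinatorial counting of planar embeddings versus automorphisms go through without difficulty.
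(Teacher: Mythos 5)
Your proposal is correct and follows essentially the same route as the paper's proof: induction on the number of leaves, the $k=2$ base case via the antisymmetry of $\operatorname{Im}\varphi\subset\lib^2(V)$, the decomposition $T=T'\smalltree T''$ with the four-term regrouping of $(P_{S'}\otimes P_{S''})\circ\varphi$ into brackets, and the same two-case count $|\Aut T|=|\Aut T'||\Aut T''|$ versus $|\Aut T|=2|\Aut R|^2$. The only difference is cosmetic: you expand $\bracket_T\circ P_T$ recursively and regroup into $\sum_{S\in\overline T}P_S$, while the paper runs the same chain of equalities in the opposite direction.
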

Here $\Aut T$ denotes the automorphism group of $T$ as a non planar rooted tree.
\begin{proof}
We proceed  by induction on the number of leaves.
Given $v\in V$, write $\varphi (v)=\sum_ia_i\otimes b_i\pm b_i\otimes a_i=\sum_i[a_i,b_i]$. Then,
$$
\begin{aligned}
P_{\smalltree}(v)&=\sum_i[\psi(a_i),\psi(b_i)]=\sum_i\frac{1}{2} \bigl( [\psi(a_i), \psi(b_i)]\pm[\psi(b_i),\psi( a_i)]\bigr)\\
&=\frac{1}{2}Q_{\smalltree}\circ P_{\smalltree}(v)
\end{aligned}
$$
which proves the statement for $k=2$.

For the general case, and to avoid excessive notation, we write $\varphi(v)=ab-(-1)^{|a||b|}ba$ and consider $\psi$ to be $\id_V$.  As in (\ref{arbol}) decompose any given tree $T\in\mathscr{PT}_k$ in the form $T=T'\smalltree T''$.

Suppose first that  $T'\not=T''$. Then,
$\overline T=\{S'\smalltree S'',\, S''\smalltree S'\}_{S'\in \overline T',S''\in\overline T''}$ and $|\Aut T|=|\Aut T'||\Aut T''|$.
Therefore,
\begin{align*}
\sum_{S\in \overline T}P_S(v)&=\Bigl( \sum_{S'\in\overline T'}P_{S'}\otimes\sum_{S''\in\overline T''}P_{S''}+ \sum_{S''\in\overline T''}P_{S''}\otimes\sum_{S'\in\overline T'}P_{S'}\Bigr)(ab-(-1)^{|a||b|}ba)\\
&=\sum_{S'\in\overline T'}P_{S'}(a)\otimes \sum_{S''\in\overline T''}P_{S''}(b)-(-1)^{|a||b|}\sum_{S'\in\overline T'}P_{S'}(b)\otimes \sum_{S''\in\overline T''}P_{S''}(a)\\
&+\sum_{S''\in\overline T''}P_{S''}(a)\otimes \sum_{S'\in\overline T'}P_{S'}(b)-(-1)^{|a||b|}\sum_{S''\in\overline T''}P_{S''}(b)\otimes \sum_{S'\in\overline T'}P_{S'}(a)\\
&=\Bigl[\sum_{S'\in\overline T'}P_{S'}(a), \sum_{S''\in\overline T''}P_{S''}(b)\Bigr]-(-1)^{|a||b|}\Bigl[\sum_{S'\in\overline T'}P_{S'}(b), \sum_{S''\in\overline T''}P_{S''}(a)\Bigr]\\
&=\Bigl[\frac{1}{|\Aut T'|}\bracket_{T'}\circ P_{T'}(a),\frac{1}{|\Aut T''|}\bracket_{T''}\circ P_{T''}(b) \Bigr]\\
&-(-1)^{|a||b|}\Bigl[\frac{1}{|\Aut T'|}\bracket_{T'}\circ P_{T'}(b),\frac{1}{|\Aut T''|}\bracket_{T''}\circ P_{T''}(a) \Bigr]\\
&=\frac{1}{|\Aut T|}\bracket_T\circ P_T (v)
\end{align*}

Assume now $T=R\smalltree R$. In this case  $\overline T=\{S'\smalltree S''\}_{S',S''\in\overline R}$  and $|\Aut T|=2|\Aut R|^2$. Then write,
\begin{align*}
\sum_{S\in \overline T}P_S(v)
&=\Bigl( \sum_{S'\in R}P_{S'}\otimes\sum_{S''\in R}P_{S''}\Bigr) (ab-(-1)^{|a||b|}ba)\\
&=\frac{1}{2}\Bigl( \sum_{S'\in R}P_{S'}\otimes\sum_{S''\in R}P_{S''}+ \sum_{S''\in R}P_{S''}\otimes\sum_{S'\in R}P_{S'}\Bigr)(ab-(-1)^{|a||b|}ba),
\end{align*}
which coincides with  $\frac{1}{|\Aut T|}\bracket_T\circ P_T (v)$ by the same computation of the previous case.
\end{proof}

Recall that an $A_\infty$-coalgebra structure on a graded vector space $V$ corresponds univocally to a differential in the complete tensor algebra $\widehat T(s^{-1}V)=\Pi_{n\ge 0}T^n(s^{-1}V)$  on the desuspension of $V$, $(s^{-1}V)_p=V_{p+1}$. Indeed,  such a differential $d$   is determined by its image  on $s^{-1}V$, which is written as a sum $d=\sum_{k\ge 1}d_k$, with $d_k(s^{-1}C)\subset T^{ k}(s^{-1}V)$, for $k\ge 1$. Then, the operators $\{\Delta_k\}_{k\ge 1}$ and $\{d_k\}_{k\ge 1}$ define each other via
\begin{equation}\label{diferencial}
\begin{aligned}
\Delta_k&=-s^{\otimes k}\circ d_k\circ s^{-1}\colon V\to
V^{\otimes k},\\
d_{k}&=-(-1)^{\frac{k(k-1)}{2}}(s^{-1})^{\otimes k}\circ\Delta_k\circ s\colon s^{-1}V\to T^{ k}(s^{-1}V).
\end{aligned}
\end{equation}
Moreover,  from Theorem \ref{Liepolynomial} we easily deduce a short proof of a long standing fact whose history is described in \cite[Introduction]{lawsu}.
\begin{theorem}\label{Quillenconstruction}
Let $(C, V, i, p, K)$ be a homotopy retract where $C$ is a cocommutative differential graded coalgebra and consider the $C_\infty$-coalgebra structure transferred on $V$. Then, the differential on the tensor algebra  restricts to Lie polynomials, i.e., $ds^{-1}v\in \libc(s^{-1}V)$ for any generator $s^{-1}v\in s^{-1}V$.
\end{theorem}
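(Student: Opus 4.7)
The plan is to translate the statement to the desuspended side $s^{-1}C$, where cocommutativity of $\Delta$ becomes exactly the hypothesis of Theorem \ref{Liepolynomial}, and then to organize the tree sum so that (the proof of) that theorem applies.

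By formulas (\ref{cinfinito}) and (\ref{diferencial}),
$$d_k(s^{-1}v)=-(-1)^{k(k-1)/2}(s^{-1})^{\otimes k}\Bigl(\sum_{T\in\mathscr{PT}_k}\Delta_T\Bigr)(v),$$
and for each $T\in\mathscr{PT}_k$ one has $\Delta_T=p^{\otimes k}\circ\Phi_T\circ i$, where $\Phi_T\colon C\to C^{\otimes k}$ is obtained from $T$ by placing $\Delta$ at the internal vertices and $K$ on the internal edges. Setting $\pi=s^{-1}ps$, $\iota=s^{-1}is$, $\delta=(s^{-1})^{\otimes 2}\Delta s$ and $h=s^{-1}Ks$, the desuspended version reads $(s^{-1})^{\otimes k}\Delta_T s=\pm\,\pi^{\otimes k}\circ\widetilde\Phi_T\circ\iota$, where $\widetilde\Phi_T\colon s^{-1}C\to (s^{-1}C)^{\otimes k}$ uses $\delta$ at internal vertices and $h$ at internal edges. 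Cocommutativity of $\Delta$ translates precisely to $\delta(s^{-1}C)\subset\lib^2(s^{-1}C)$, which is the input of Theorem \ref{Liepolynomial}. Partitioning $\mathscr{PT}_k=\bigsqcup_\tau\overline\tau$ into planar-embedding classes of non-planar trees $\tau$, and noting that $\pi^{\otimes k}$ carries $\lib^k(s^{-1}C)$ into $\lib^k(s^{-1}V)$ (brackets in a tensor algebra are natural under linear maps), the theorem reduces to
$$\sum_{S\in\overline\tau}\widetilde\Phi_S(x)\ \in\ \lib^k(s^{-1}C)\qquad\text{for every $\tau$ and every }x\in s^{-1}C;$$
setting $x=\iota(s^{-1}v)$ and summing over $\tau$ then delivers $d_k(s^{-1}v)\in\lib^k(s^{-1}V)$, so $d(s^{-1}v)\in\libc(s^{-1}V)$.

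I would establish the displayed claim by induction on $k$, copying the proof of Theorem \ref{Liepolynomial} almost verbatim. The case $k=1$ is trivial. For the step with $\tau=\tau'\smalltree\tau''$ of subtree sizes $k', k''$,
$$\widetilde\Phi_{S'\smalltree S''}=\bigl(\widetilde\Phi_{S'}\circ h^{\epsilon'}\otimes\widetilde\Phi_{S''}\circ h^{\epsilon''}\bigr)\circ\delta,$$
where $\epsilon'=0$ if $\tau'=|$ and $1$ otherwise (and similarly for $\epsilon''$), since $K$ appears on an edge precisely when both of its endpoints are internal vertices. By the inductive hypothesis, $F_L(y):=\sum_{S'\in\overline{\tau'}}\widetilde\Phi_{S'}(h^{\epsilon'}(y))$ lies in $\lib^{k'}(s^{-1}C)$ and $F_R(y):=\sum_{S''\in\overline{\tau''}}\widetilde\Phi_{S''}(h^{\epsilon''}(y))$ lies in $\lib^{k''}(s^{-1}C)$. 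Writing $\delta(x)=[u,v]$ by cocommutativity (as in the $k=2$ case of the proof of Theorem \ref{Liepolynomial}), the dichotomy $\tau'\neq\tau''$ versus $\tau'=\tau''$ of that proof reassembles the sum into a graded commutator of the form $[F_L(u),F_R(v)]\pm[F_L(v),F_R(u)]$, manifestly in $\lib^k(s^{-1}C)$.

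The only conceptual novelty is that the trees appearing in the transferred structure carry the chain homotopy $h$ on their internal edges, whereas the trees $\varphi_T$ of Theorem \ref{Liepolynomial} carry no such decoration. This is harmless because $h^{\otimes 2}[a,b]=[h(a),h(b)]$, so $h^{\otimes 2}$ preserves $\lib^2$, and the induction of Theorem \ref{Liepolynomial} thus goes through unchanged with an extra $h^{\epsilon'}$ and $h^{\epsilon''}$ absorbed into $F_L$ and $F_R$ respectively. The sign bookkeeping from desuspension follows the Koszul rule and is routine.
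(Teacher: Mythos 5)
Your proposal runs on the same engine as the paper's proof: expand the transferred $\Delta_k$ as the tree sum of (\ref{cinfinito}), partition $\mathscr{PT}_k$ into the classes $\overline\tau$ of planar embeddings of a non\--planar tree, and use cocommutativity at each vertex to reassemble every class sum into nested brackets, exactly the mechanism of Theorem \ref{Liepolynomial}. Where you differ is in the bookkeeping, and on two points you are in fact more careful than the paper. First, you observe that in the transfer trees $K$ decorates only the \emph{internal} edges, so the paper's literal identification $\Delta_T=P_T\circ i$ with $\varphi=(K\otimes K)\circ\Delta$ places a spurious $K$ on every leaf edge; you repair this by re-running the induction of Theorem \ref{Liepolynomial} with the factors $h^{\epsilon'},h^{\epsilon''}$ absorbed into the inductive maps, which is precisely the spirit of Remark \ref{comentario} (only the tree shape matters). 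Second, you desuspend first and locate the antisymmetry in $\delta=(s^{-1})^{\otimes 2}\Delta s$ rather than in $(K\otimes K)\circ\Delta$ via the odd degree of $K$; a pleasant consequence you could state explicitly is that every slot map $\widetilde\Phi_{S}\circ h^{\epsilon}$ has degree zero (a subtree with $j\ge 2$ leaves contributes $j-1$ copies of $\delta$ and $j-2$ internal copies of $h$, plus the connecting $h$), so your inductive reassembly carries no Koszul signs at all and the computation in the proof of Theorem \ref{Liepolynomial} applies verbatim.

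Two cautions, both about signs, which you dismiss as routine although they are the only delicate part. (i) The parenthetical $h^{\otimes 2}[a,b]=[h(a),h(b)]$ is false for the odd-degree map $h$: for odd $h$, $h\otimes h$ sends antisymmetric tensors to symmetric ones, so $h^{\otimes 2}$ does \emph{not} preserve $\lib^2$. Your argument does not actually use this (the recursion composes single $h$'s inside $F_L$ and $F_R$ and never applies $h\otimes h$ to a bracket), but the sentence should go. (ii) The bridge $(s^{-1})^{\otimes k}\Delta_T s=\pm\,\pi^{\otimes k}\widetilde\Phi_T\iota$ holds with a sign depending on the planar tree, and with the evident Koszul conventions these signs are \emph{not} constant on a class $\overline\tau$: already for $k=3$ the left and right combs acquire opposite signs, because the odd map $K$ sits in different tensor slots and the Koszul signs of $(s^{-1})^{\otimes 3}$ do not treat the slots symmetrically. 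As written, your reduction to the unsigned claim $\sum_{S\in\overline\tau}\widetilde\Phi_S(x)\in\lib^k(s^{-1}C)$ therefore does not follow from (\ref{cinfinito}) and (\ref{diferencial}) read naively; you must either fix conventions so that the desuspended tree sum is sign-free (equivalently, perform the transfer of Theorem \ref{HTT} directly on the shifted retract, whose structure maps are $\delta$, $h$, $\iota$, $\pi$, and take that as the definition of the $d_k$), or track the per-tree signs explicitly. This ambiguity is inherited from the paper, whose own proof is looser on exactly the same point, but since your argument rests on summing the $\widetilde\Phi_S$ with coefficient $+1$, it is the step to pin down rather than defer.
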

\begin{proof}
Simply observe that each $\Delta_T$  in the expression of $\Delta_k$ in (\ref{cinfinito}) is of the form $P_T\circ i$ where $P_T\colon C\to V^{\otimes k}$ is the map in (\ref{polinomio}) associated to $\psi=p$ and $\varphi= (K\otimes K)\circ\Delta$,  which is a Lie polynomial since $C$ is cocommutative. Apply now Theorem \ref{Liepolynomial} and the result follows.
\end{proof}

In particular, the differential $d$ makes $\libc(s^{-1}V)$ a cDGL which we call the {\em $\infty$-Quillen functor} and denote it  by $\Qfuntor(V)$.

\begin{remark} \label{comentario}
Note that, altough we have considered $C_\infty$-coalgebra structures transferred in Theorem \ref{Quillenconstruction} to deduce directly that $ds^{-1}v$  is a Lie polynomial, Theorem \ref{Liepolynomial} does not require the existence of transferred structures to have Lie polynomials, but merely a ``tree shaped'' form for the linear maps involved.
\end{remark}

For each $n\ge 1$ let $\Delta^n$ be the standard $n$-simplex and write in the same way the graded vector space of its  simplicial chains. We denote by $a_{i_0\dots i_k}$, $0\le i_0<\dots< i_k\le n$, the generator of the corresponding $k$-simplex. Then, by \cite[Theorems 2.3 and 2.8]{pri}, there is a unique (up to isomorphism) cDGL of the form  $\lasu_n=(\libc(s^{-1}\Delta^n),\partial)$ such that:
 \smallskip

 (1) For each $i=0,\dots,n$, the generators $s^{-1}a_i\in s^{-1}\Delta^n_0$ corresponding to vertices are Maurer-Cartan elements, $\partial s^{-1}a_i=-\frac{1}{2}[s^{-1}a_i,s^{-1}a_i]$.
\smallskip

 (2) The linear part $\partial_1$ of $\partial$ is precisely  the differential of the simplicial chain complex  $s^{-1}\Delta^n$.

Moreover \cite[\S3]{pri}, there is a natural cosimplicial cDGL structure on $\lasu_\bullet$ and we define the realization of any cDGL $L$ as the simplicial set,
$$
\langle L\rangle={\Hom_{\cdgl}}(\lasu_\bullet,L).
$$

The infinity Quillen functor allows us to define this cosimplicial cDGL $\lasu_\bullet$ in an alternative way than the constructive given in \cite[\S3]{pri} or \cite[\S7]{pri} in terms of Lie coalgebras or the $\mathfrak{mc}_\bullet$ of \cite[\S5.2]{ni1}.

Denote by $\asu_\bullet$
 the simplicial CDGA of PL-forms on the standard simplices,
$$
\asu_n=\Lambda(t_0,\dots,t_n,dt_0,\dots,dt_n)/(\textstyle{\sum t_i-1,\sum dt_i)},
$$
 and let  $C^*(\Delta^\bullet)$   be the simplicial cochain complex also on the standard simplices. Then \cite{du,du2,getz}, there is a homotopy retract
\begin{equation}\label{retract}
\xymatrix{
 \ar@(ul,dl)@<-5.5ex>[]_{K_\bullet}
 &
  \asu_\bullet \ar@<0.75ex>[r]^-{p_\bullet}
  &
   {C^*(\Delta^\bullet),} \ar@<0.75ex>[l]^-{i_\bullet} }
\end{equation}
where the maps $p_{\bullet}$ and $i_{\bullet}$ are defined as follows:

Let  $\alpha_{i_0\dots i_k}$ be the basis for $C^*(\Delta^n)$ defined by
$$\langle \alpha_{i_0\dots i_k}, a_{j_0\dots j_k}\rangle = \left\{\renewcommand{\arraystretch}{1.5}\begin{array}{ll}
(-1)^{\frac{k(k-1)}{2}} \, & \mbox{if }(j_0, \dots , j_k) = (i_0,\dots , i_k),\\
\qquad 0 & \mbox{otherwise.}
\end{array}\renewcommand{\arraystretch}{1}
\right.$$
Then,  $
 i_n(\alpha_{i_0\dots i_k}) $ is the Whitney elementary form
 $\omega_{i_0\dots i_k} $ defined by
 $$
 \omega_{i_0\dots i_k}  = k! \sum_{j=0}^k (-1)^j t_{i_j} dt_{i_0}\cdots \widehat{dt_{ij}}\cdots dt_{i_k}.$$
The map $p_{n}\colon \asu_n\to C^*(\Delta^n)$ is defined by
$$p_n(\omega) = \sum_{k=0}^n \sum_{i_0<\dots < i_k} \alpha_{i_0\dots i_k} {\mathcal I}_{i_0\dots i_k}(\omega),$$
with
$$ {\mathcal I}_{i_0\dots i_k}(t^{b_1}_{i_1}\dots t_{i_k}^{b_k} dt_{i_1}\dots dt_{i_k})= \frac{b_1!\cdots b_k!}{(b_1+\dots + b_k+k)!},$$
 and $0$ otherwise.
 In particular, $ {\mathcal I}_{i_0\dots i_k}( \omega_{i_0\dots i_k} )=1$.

 Theorem \ref{HTT} induces a simplicial $C_\infty$-algebra structure on $C^*(\Delta^\bullet)$ and, since this is a finite dimensional simplicial cochain complex, Remark \ref{remarco1} provides by dualizing a $C_\infty$-coalgebra structure on the simplicial chain complex $\Delta^\bullet$ and a cosimplicial cDGL of the form $\libc(s^{-1}\Delta^\bullet)$ applying Theorem \ref{Liepolynomial}. By uniqueness \cite[Theorem 2.8]{pri} it follows that $\libc(s^{-1}\Delta^\bullet)=\lasu_\bullet$

\section{Maurer-Cartan elements and realization of Lie algebras}\label{realization}

In this section we will show that the realization $\langle \lasu \rangle_\bullet$ is a strong deformation retract of the Deligne-Hinich-Getzler $\infty$-groupoid $\text{MC} (L\otimes \asu_\bullet )$ relying on explicit combinatorial computations.

 Let $C$ be a CDGC and let $L$ be a DGL. Recall that the linear maps $\Hom(C, L)$ have a natural DGL structure  with the usual differential given by the bracket $[\partial , \delta ]$ of the differentials in $L$ and $C$ respectively, and the {\em convolution Lie bracket} $[f,g]=[\,,\,]\circ (f\otimes g)\circ \Delta$.

Hence, if $(C, V, i, p, K)$ is   a homotopy retract,   then
$$(\Hom(C, L), \Hom(V, L), i^*, p^*, K^*)$$
is again a homotopy retract which, by Theorem \ref{HTT}, induces an $L_\infty$-structure on $\Hom(V,L)$.

We now prove that, if $L$ is complete, then the restriction,
 $$
{\Hom_{\cdgl}}(\Qfuntor(V),L)\longrightarrow \Hom_{-1}(V,L),\quad f\mapsto f\circ s^{-1},
 $$
 of any cDGL morphism  to its generators provides the following bijection.

\begin{theorem}\label{main} For any $\cdgl$ $L$, ${\Hom_{\cdgl}}(\Qfuntor(V),L)\cong\mc\bigl(\Hom(V,L)\bigr)$.
\end{theorem}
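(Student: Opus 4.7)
My plan is to reduce the theorem to a tree-by-tree matching between the differential $d$ on $\Qfuntor(V)=\libc(s^{-1}V)$ and the transferred $L_\infty$-structure on $\Hom(V,L)$, with Theorem \ref{Liepolynomial} as the bridge. First I would use that $\libc(s^{-1}V)$ is free as a complete Lie algebra, so any Lie algebra morphism $\tilde\alpha\colon\libc(s^{-1}V)\to L$ is determined by a degree zero linear map $s^{-1}V\to L$, equivalently by $\alpha=\tilde\alpha\circ s^{-1}\in\Hom_{-1}(V,L)$. This already sets up a bijection at the level of Lie maps; what remains is to show that $\tilde\alpha$ respects the differentials, hence defines a cDGL morphism, if and only if $\alpha$ is Maurer-Cartan in the transferred $L_\infty$-structure. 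Since $\tilde\alpha$ is a Lie map, the compatibility of differentials need only be checked on generators: $\partial\tilde\alpha(s^{-1}v)+\tilde\alpha(d\,s^{-1}v)=0$ for every $v\in V$.

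Next I would unwind $d\,s^{-1}v=\sum_{k\ge1}d_k\,s^{-1}v$ using (\ref{diferencial}) and (\ref{cinfinito}). Each summand equals, up to Koszul signs, $(s^{-1})^{\otimes k}\sum_{T\in\mathscr{PT}_k}P_T(i(v))$, where $P_T$ is built with $\psi=p$ and $\varphi=(K\otimes K)\Delta$ as in the proof of Theorem \ref{Quillenconstruction}. Grouping planar embeddings by their non-planar isomorphism class and applying Theorem \ref{Liepolynomial}, this sum reorganises as $\sum_{T\in\mathscr{T}_k}\frac{1}{|\Aut T|}\bracket_T\circ P_T\circ i$, expressing $d_k\,s^{-1}v$ as an explicit sum of iterated Lie brackets in $\libc(s^{-1}V)$. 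Pushing forward along the Lie map $\tilde\alpha$ replaces each bracket in $\libc(s^{-1}V)$ by the bracket in $L$ and each desuspended generator by $\alpha$, so $\tilde\alpha(d_k\,s^{-1}v)$ becomes a tree sum of iterated $L$-brackets of the components of $(\alpha\circ p)^{\otimes k}\varphi_T(i(v))$.

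Independently I would compute $\frac{1}{k!}\ell_k(\alpha,\dots,\alpha)(v)$ in the $L_\infty$-structure transferred to $\Hom(V,L)$ along the retract $(\Hom(C,L),\Hom(V,L),i^*,p^*,K^*)$. By (\ref{linfinito}), $\ell_k=\sum_{T\in\mathscr{T}_k}\ell_T/|\Aut T|$, with $\widetilde\ell_T$ placing $p^*$ at the leaves, $i^*$ at the root, $K^*$ on internal edges and the convolution bracket $[f,g]=[\,,\,]_L(f\otimes g)\Delta$ at internal vertices. Since $\alpha$ has odd degree, the symmetrisation $\mathcal{S}_k$ evaluated at $(\alpha,\dots,\alpha)$ contributes exactly a factor $k!$, cancelling the $1/k!$ in the MC equation. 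Unfolding the nested convolution brackets propagates $\Delta$ of $C$ along $T$, inserts $K$ on internal edges, and applies $p$ at the leaves, reproducing precisely the pattern encoded by $\varphi_T$; the outermost nested $L$-bracket matches $\bracket_T$. Consequently $\frac{1}{k!}\ell_k(\alpha^{\otimes k})(v)$ agrees tree-by-tree with $\tilde\alpha(d_k\,s^{-1}v)$, and summing over $k$ identifies the MC equation with the differential-compatibility of $\tilde\alpha$. Completeness of $L$ ensures convergence of both infinite sums.

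The hard part will be the combinatorial and sign bookkeeping: matching the Koszul signs introduced by the desuspensions $(s^{-1})^{\otimes k}$ in (\ref{diferencial}) with those produced by evaluating a graded symmetric $L_\infty$-structure on equal odd inputs, and verifying that the factor $k!$ from $\mathcal{S}_k(\alpha^{\otimes k})$ together with the $|\Aut T|$ weights on $\mathscr{T}_k$ reproduces exactly the sum over $\mathscr{PT}_k$ delivered by Theorem \ref{Liepolynomial} on the differential side. Once these factors align, the tree-by-tree identification is automatic and the underlying bijection $f\leftrightarrow f\circ s^{-1}$ restricts to the stated bijection between cDGL morphisms and Maurer-Cartan elements.
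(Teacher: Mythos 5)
Your proposal is correct and takes essentially the same route as the paper: reduce to generators by freeness, then identify $\frac{1}{k!}\ell_k(fs^{-1},\dots,fs^{-1})$ with $-fd_ks^{-1}$ tree-by-tree, using the unfolding of the nested convolution brackets (the paper's Lemma \ref{lemalinfinito}), Theorem \ref{Liepolynomial} to pass between the $\mathscr{T}_k$-sum weighted by $1/|\Aut T|$ and the $\mathscr{PT}_k$-sum, and the factor $k!$ from symmetrization on equal odd inputs. The only cosmetic difference is that you apply Theorem \ref{Liepolynomial} with $\psi=p$ inside $\libc(s^{-1}V)$ and then push forward along the Lie morphism, whereas the paper absorbs $fs^{-1}$ into $\psi=fs^{-1}p$ and applies it directly in $L$; the sign bookkeeping you defer is exactly the paper's computation (\ref{util}).
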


For $k\ge 2$, let $\Delta_k$ and $\ell_k$ denote, as in (\ref{cinfinito}) and (\ref{linfinito}),  the $k$-th diagonal and $k$-th bracket induced on $V$ and $\Hom(V,L)$ respectively. A straightforward computation provides the following.

\begin{lemma}\label{lemalinfinito}
 For each $T\in \mathscr{PT}_k$,
$$
\widetilde\ell_T=\gamma\circ Q_T\circ (-\otimes\stackrel{k}{ \cdots }\otimes -)\circ \Delta_T.
$$\hfill $\square$
\end{lemma}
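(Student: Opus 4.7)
The plan is induction on the number of leaves $k$ of the tree $T$, matching the recursive unpacking of $\widetilde\ell_T$ (via the labeling rules recalled in Section~\ref{Introduction}) with the recursive construction of $\Delta_T$ from (\ref{cinfinito}) and of the nested bracket $\bracket_T$ from (\ref{Q}).

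To make the recursion clean, I would first introduce an auxiliary ``rootless'' version $\widetilde{\ell'}_T\colon\Hom(V,L)^{\otimes k}\to\Hom(C,L)$, obtained from $\widetilde\ell_T$ by omitting the final root label $i^*$, together with $\Delta'_T\colon C\to V^{\otimes k}$ obtained from $\Delta_T$ by dropping the initial factor $i\colon V\to C$. By construction $\widetilde\ell_T=i^*\circ\widetilde{\ell'}_T$ and $\Delta_T=\Delta'_T\circ i$, so it suffices to prove the sharper identity
$$
\widetilde{\ell'}_T(f_1,\dots,f_k)\;=\;\bracket_T\circ(f_1\otimes\cdots\otimes f_k)\circ\Delta'_T,
$$
from which the stated formula follows by post-composition with $i$ (absorbing whatever identification the symbol $\gamma$ denotes in the statement into this last step).

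For the base case of the trivial tree ($k=1$) one has $\widetilde{\ell'}_|(f)=f\circ p=p^*(f)$, $\bracket_|=\id$, and $\Delta'_|=p$, so both sides agree. For the inductive step, write $T=T'\smalltree T''$ with $k'$ and $k''$ leaves. By the very definition of $\widetilde{\ell'}_T$ (convolution bracket at the internal vertex adjacent to the root, $K^*$ on the two descending internal edges, and the sub-trees handled by $\widetilde{\ell'}_{T'}$ and $\widetilde{\ell'}_{T''}$) one obtains
$$
\widetilde{\ell'}_T(f_1,\dots,f_k)\;=\;\bigl[\,K^*\widetilde{\ell'}_{T'}(f_1,\dots,f_{k'}),\;K^*\widetilde{\ell'}_{T''}(f_{k'+1},\dots,f_k)\,\bigr]_{\text{conv}}.
$$
Expanding the convolution bracket as $[\,,\,]_L\circ(-\otimes-)\circ\Delta_C$, using that $K^* g=g\circ K$, and feeding in the inductive hypothesis for $T'$ and $T''$, the right-hand side becomes
$$
[\,,\,]_L\circ(\bracket_{T'}\otimes\bracket_{T''})\circ(f_1\otimes\cdots\otimes f_k)\circ(\Delta'_{T'}\otimes\Delta'_{T''})\circ(K\otimes K)\circ\Delta_C.
$$
By the recursive definition of the nested bracket associated to a planar tree, $\bracket_T=[\,,\,]_L\circ(\bracket_{T'}\otimes\bracket_{T''})$, and by the tree recursion behind (\ref{cinfinito}), $\Delta'_T=(\Delta'_{T'}\otimes\Delta'_{T''})\circ(K\otimes K)\circ\Delta_C$. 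Substituting, one recovers $\bracket_T\circ(f_1\otimes\cdots\otimes f_k)\circ\Delta'_T$, completing the induction.

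The only real difficulty is the careful bookkeeping of Koszul signs produced when the tensor factors $f_i$ are transported past the maps $\Delta'_{T'}\otimes\Delta'_{T''}$ and $K\otimes K$; these signs are exactly the ones dictated by the Koszul convention, and they match on both sides of the proposed equality, so nothing more than routine sign tracking is required. No new ideas are needed beyond the tree recursion already used in Section~\ref{Introduction}.
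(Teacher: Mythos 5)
Your overall strategy --- structural induction on the tree, comparing the recursive build-up of $\widetilde\ell_T$ for the retract $(\Hom(C,L),\Hom(V,L),p^*,i^*,K^*)$ with that of $\Delta_T$ and of the nested bracket $\bracket_T$ --- is exactly the ``straightforward computation'' the paper leaves to the reader, and reducing to rootless maps $\widetilde{\ell'}_T$, $\Delta'_T$ is a sensible device. The flaw is in the two displayed recursions of your inductive step: they place a homotopy on \emph{every} edge below the root vertex, including leaf edges. In the paper's labeling rules $K$ (resp.\ $K^*$) decorates only internal edges, i.e.\ edges joining two vertices carrying $\Delta$ (resp.\ the convolution bracket); leaf edges carry only $p$ (resp.\ $p^*$), as the explicit example $\Delta_T=p^{\otimes 4}\circ(\Delta\circ K\otimes\Delta\circ K)\circ\Delta\circ i$ shows. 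Consequently, already for the two-leaf tree $T=|\smalltree|$ your decomposition $\widetilde{\ell'}_T(f_1,f_2)=[K^*\widetilde{\ell'}_|(f_1),K^*\widetilde{\ell'}_|(f_2)]$ gives $[\,,\,]_L\circ(f_1pK\otimes f_2pK)\circ\Delta_C$ instead of the true $[\,,\,]_L\circ(f_1p\otimes f_2p)\circ\Delta_C$, and likewise $(\Delta'_|\otimes\Delta'_|)\circ(K\otimes K)\circ\Delta_C=(pK\otimes pK)\circ\Delta_C$ is not $\Delta'_T=(p\otimes p)\circ\Delta_C$. So the inductive step passes through two false intermediate identities (false ``in parallel'' on any subtree that is a single leaf), and as written the argument does not establish the lemma, even though its conclusion is correct.

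The repair is routine and keeps your computation intact: make the recursion case-sensitive at trivial subtrees. For instance, set $\widehat\ell_S=K^*\circ\widetilde{\ell'}_S$ and $\widehat\Delta_S=\Delta'_S\circ K$ when $S$ has at least one internal vertex, but $\widehat\ell_|=p^*$ and $\widehat\Delta_|=p$; then the recursions $\widetilde{\ell'}_{T'\smalltree T''}=[\widehat\ell_{T'}(-),\widehat\ell_{T''}(-)]$ and $\Delta'_{T'\smalltree T''}=(\widehat\Delta_{T'}\otimes\widehat\Delta_{T''})\circ\Delta_C$ do hold, and a simultaneous induction on the primed and hatted identities (or, equivalently, splitting your step according to whether $T'$ or $T''$ is a single leaf) goes through exactly as you computed, with the same Koszul-sign bookkeeping. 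Two minor points: what you want on the right-hand side is $\gamma\circ \bracket_T$, the formal brackets evaluated in $L$, not $\bracket_T$ alone (as you note, this is only notation), and the passage from the primed identity to the lemma is precomposition with $i$, since $i^*(g)=g\circ i$ and $\Delta_T=\Delta'_T\circ i$.
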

Here $Q_T$ is the map (\ref{Q}) and $\gamma\colon \lib^k(L)\to L$ is the Lie bracketing morphism induced by the identity $\id_L$.

\begin{proof}[Proof of Theorem $\ref{main}$]

We have to show that $f\in{\Hom_{\cdgl}}(\Qfuntor(V),L)$ if and only if $fs^{-1}\in\mc\bigl(\Hom(V,L)\bigr)$. In other words,
$$
\partial f(s^{-1}v)+\sum_{k\ge 1}f(d_ks^{-1}v)=0\,\,\text{if and only if}\,\,\sum_{k\ge 1}\frac{1}{k!}\ell_k(fs^{-1},{\ldots},fs^{-1})(v)=0,
$$
for any $v\in V$, being $d=\sum_{k\ge 1}d_k$
 as in (\ref{diferencial}).
Recall that $\ell_1(fs^{-1})$ is precisely the differential in $\Hom(V,L)$, i.e., $\ell_1(fs^{-1})(v)=\partial f(s^{-1}v)+f(d_1s^{-1}v)$, and therefore it suffices to show that for any $v\in V$,
$$
\sum_{k\ge 2}f(d_ks^{-1}v)=0\quad\text{if and only if}\quad \sum_{k\ge 2}\frac{1}{k!}\ell_k(fs^{-1},{\ldots},fs^{-1})(v)=0.
$$
 In fact we prove that, as maps,
 $$
 fd_ks^{-1}=-\frac{1}{k!}\ell_k(fs^{-1},{\ldots},fs^{-1}),\quad\text{for any $k\ge 2$.}
 $$
On the one hand, using formula (\ref{linfinito}) first and then Lemma \ref{lemalinfinito},
$$
\begin{aligned}
\frac{1}{k!}\ell_k(fs^{-1},{\ldots},fs^{-1})&=\frac{1}{k!} \sum_{T\in \mathscr{T}_k}\frac{\ell_T(fs^{-1},{\ldots},fs^{-1})}{|\Aut T|}\\
&= \sum_{T\in \mathscr{T}_k}\frac{\widetilde\ell_T(fs^{-1},{\ldots},fs^{-1})}{|\Aut T|}\\
&=\sum_{T\in \mathscr{T}_k}\frac{\gamma\circ Q_T\circ (fs^{-1})^{\otimes k}\circ \Delta_T}{|\Aut T|}.\\
\end{aligned}
$$
Now observe that we may write
$$
(fs^{-1})^{\otimes k}\circ \Delta_T=P_T\circ i$$ where $P_T\colon C\to L^{\otimes k}$ is the map in (\ref{polinomio}) associated to $\psi=fs^{-1}p$ and $\varphi= (K\otimes K)\circ\Delta$  which is a Lie polynomial since $C$ is cocommutative.
Hence, by Theorem \ref{Liepolynomial}, the above reduces to
$$
 \sum_{T\in \mathscr{PT}_k}\gamma\circ (fs^{-1})^{\otimes k}\circ \Delta_T.
$$
On the other hand, in view of equation (\ref{diferencial}), formula (\ref{cinfinito}), and using that $f$ commutes with Lie brackets, we deduce:
\begin{equation}\label{util}
\begin{aligned}
fd_ks^{-1}&=-(-1)^{\frac{k(k-1)}{2}}f\circ (s^{-1})^{\otimes k}\circ\Delta_k\circ s\circ s^{-1}\\
&=-(-1)^{\frac{k(k-1)}{2}}f\circ (s^{-1})^{\otimes k}\circ  \sum_{T\in \mathscr{PT}_k}\Delta_T\\
&=- \sum_{T\in \mathscr{PT}_k}\gamma\circ (fs^{-1})^{\otimes k}\circ \Delta_T.
\end{aligned}
\end{equation}
\end{proof}

In the dual setting, let $(A, V, i, p, K)$ be   a homotopy retract of the  CDGA $A$ and denote by $\{m_k\}_{k\ge 1}$ the $C_\infty$-structure induced on $V$ by Theorem \ref{HTT}. If $L$ is a cDGL consider
$$
A\cotimes L=\varprojlim_n (A\otimes L/L^n)$$
with the naturally  inherited DGL structure, and observe that
$$(A\cotimes L, V\cotimes L, i\cotimes \id_L, p\cotimes \id_L, K\cotimes \id_L)$$
is again a homotopy retract which induces an $L_\infty$-structure on $V\cotimes L$.

\begin{remark}\label{remarco1} To identify its Maurer-Cartan set as cDGL morphisms it is necessary to assume that $V$ is a finite type vector space and we do so henceforth. Under this assumption we have:

\medskip

(i) The natural maps
$$
V\cotimes L=\varprojlim_n (V\otimes L/L^n)\stackrel{\cong}{\longrightarrow}V\otimes \varprojlim_n L/L^n=V\otimes L,$$
$$
\Psi\colon V\otimes L\stackrel{\cong}{\longrightarrow} \Hom(V^\sharp,L),\quad \Psi(a\otimes x)(\alpha)=(-1)^{|a||x|}\langle a,\alpha\rangle x,
$$
are isomorphisms  of graded vector spaces.

\medskip

(ii) For each $k\ge 1$, the dual map $
m_k^\sharp\colon V^\sharp\longrightarrow (V^{\otimes k})^{\sharp}
$, once composed with the canonical isomorphism  $(V^{\otimes k})^{\sharp}\cong {V^{\sharp}}^{\otimes k}$,
defines a $C_\infty$-coalgebra on $V^\sharp$ which, by Theorem \ref{Liepolynomial}, produces the cDGL $\Qfuntor (V^\sharp)=\libc(s^{-1}V^\sharp)$.
\end{remark}

We show that  if $L$ is complete,  then the restriction,
 $$
 {\Hom_{\cdgl}}(\Qfuntor(V^{\sharp}),L)\longrightarrow \Hom_{-1}(V^{\sharp},L)\cong (V\otimes L)_{-1},
 $$
 of any DGL morphism  to its generators provides the following bijection.

\begin{theorem}\label{main2} For any $\cdgl$ $L$, ${\Hom_{\cdgl}}(\Qfuntor(V^\sharp),L)\cong\mc(V\otimes L)$.
\end{theorem}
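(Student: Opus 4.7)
The plan is to adapt the proof of Theorem \ref{main} to the dual setting, using the isomorphism $\Psi\colon V\otimes L\xrightarrow{\cong}\Hom(V^\sharp,L)$ of Remark \ref{remarco1}. Given $x\in(V\otimes L)_{-1}$, the map $f=\Psi(x)\in\Hom_{-1}(V^\sharp,L)$ extends uniquely by freeness to a morphism of graded Lie algebras $\tilde f\colon\libc(s^{-1}V^\sharp)\to L$ via $\tilde f(s^{-1}\alpha)=f(\alpha)$; every element of $\Hom_{\cdgl}(\Qfuntor(V^\sharp),L)$ is of this form. Compatibility with differentials then reduces to the condition $\tilde f(ds^{-1}\alpha)=\partial f(\alpha)$ for every $\alpha\in V^\sharp$, which I would split via $d=\sum_{k\ge 1}d_k$ of (\ref{diferencial}) into a linear matching plus, for each $k\ge 2$, the identity
$$\tilde f(d_k s^{-1}\alpha)=-\tfrac{1}{k!}\Psi\bigl(\ell_k(x,\dots,x)\bigr)(\alpha).$$

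The heart of the argument is to prove this identity by computing both sides through the tree formulas (\ref{ainfinito}) and (\ref{linfinito}) and linking them via Theorem \ref{Liepolynomial}, mirroring the proof of Theorem \ref{main}. On the right, expand $\ell_k(x,\dots,x)$ via (\ref{linfinito}); since all inputs coincide, the symmetrization $\mathcal S_k$ produces a factor $k!$ that cancels with $1/k!$, leaving $\sum_{T\in\mathscr T_k}\widetilde\ell_T(x,\dots,x)/|\Aut T|$. Using the bracket decomposition $[a\otimes X,b\otimes Y]_{A\cotimes L}=(-1)^{|X||b|}ab\otimes[X,Y]_L$ at each internal vertex, each $\widetilde\ell_T(x,\dots,x)$ factorizes as a sum of pure tensors $m_T(v_\bullet)\otimes\bracket_T(X_\bullet)$, where $m_T$ is the planar tree operation from (\ref{ainfinito}) for the transferred $C_\infty$-algebra on $V$ and $\bracket_T$ is the nested Lie bracketing (\ref{Q}) in $L$. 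Applying $\Psi$ and using the adjunction $\langle m_T(v_\bullet),\alpha\rangle=\langle v_\bullet,\Delta_T^{V^\sharp}(\alpha)\rangle$, with $\Delta_k^{V^\sharp}=m_k^\sharp$ the dual $C_\infty$-coalgebra structure on $V^\sharp$, transforms the expression into $\gamma\circ\bracket_T\circ f^{\otimes k}\circ\Delta_T^{V^\sharp}(\alpha)$, the dual analogue of Lemma \ref{lemalinfinito}. On the left, the computation of (\ref{util}) now reads $\tilde f(d_k s^{-1}\alpha)=-(-1)^{k(k-1)/2}\sum_{T\in\mathscr{PT}_k}\tilde f\bigl((s^{-1})^{\otimes k}\Delta_T^{V^\sharp}\alpha\bigr)$, and Theorem \ref{Liepolynomial} (cf.\ Remark \ref{comentario}) converts this planar sum into $\sum_{T\in\mathscr T_k}\gamma\circ\bracket_T\circ f^{\otimes k}\circ\Delta_T^{V^\sharp}/|\Aut T|$, which coincides with the expression obtained from the right side.

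The main obstacle is the Koszul sign bookkeeping: the signs coming from the desuspensions $(s^{-1})^{\otimes k}$, from $\mathcal S_k$, from the graded symmetry of $m^\sharp$, and from the recursive splitting of the $A\cotimes L$ bracket must align so that the two tree sums agree term by term. Once this alignment is verified, combining it with the routine $k=1$ correspondence (the transferred differential $\ell_1$ on $V\otimes L$ matches, under $\Psi$, the convolution differential on $\Hom(V^\sharp,L)$ associated to $\partial_L$ and $m_1^\sharp$) yields Theorem \ref{main2}.
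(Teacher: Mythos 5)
Your proposal follows essentially the same route as the paper's proof of Theorem \ref{main2}: you restrict to generators and extend by freeness, identify Maurer--Cartan elements through $\Psi$, factorize the transferred brackets on $V\otimes L$ tree by tree (this is precisely Lemma \ref{lemaainfinito}, which you re-derive from the bracket in $\asu\cotimes L$), rewrite each term as $\gamma\circ Q_T\circ (fs^{-1})^{\otimes k}\circ m_T^\sharp$, and then use Theorem \ref{Liepolynomial} together with the computation (\ref{util}) (with $\Delta_T$ replaced by $m_T^\sharp$) to equate the planar and the weighted non-planar tree sums. The Koszul-sign verification you defer is exactly what the paper also treats as routine, so there is no substantive difference or gap.
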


For $k\ge 2$, let $\ell_k$ denote, as in (\ref{linfinito}),  the $k$-th bracket induced on $V\otimes L$. Then,  the proof of the following analogue of Lemma \ref{lemalinfinito} is also straightforward.

\begin{lemma}\label{lemaainfinito}
 For each $T\in \mathscr{PT}_k$,
$$
\widetilde\ell_T(a_1\otimes x_1,\dots,a_k\otimes x_k)=m_T(a_1\otimes\cdots\otimes a_k)\otimes \gamma\circ Q_T(x_1\otimes\cdots \otimes x_k).
$$\hfill $\square$
\end{lemma}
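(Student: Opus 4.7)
The plan is to argue by induction on the number $k$ of leaves of $T\in\mathscr{PT}_k$, after first noting that the transferred structure on $V\cotimes L$ collapses, on elementary tensors, to the obvious tensor of structures on the $A$-side and the $L$-side. Indeed, by Remark \ref{comentario}(i) the natural map $V\cotimes L\xrightarrow{\cong} V\otimes L$ is an isomorphism, and the extensions $i\cotimes\id_L$, $p\cotimes\id_L$, $K\cotimes\id_L$ of the contraction data to the completed tensor product act on any element of the form $a\otimes x$ as $i(a)\otimes x$, $p(a)\otimes x$, $K(a)\otimes x$ respectively. Moreover the DGL bracket inherited on $A\cotimes L$ satisfies
$$
[a\otimes x,\,b\otimes y]=(-1)^{|x||b|}\,ab\otimes [x,y],
$$
so that, on pure tensors, the $A$-part and the $L$-part evolve independently along the tree.

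For the base case $k=2$, the only tree is $T=\smalltree$, and by the very definitions of $\widetilde\ell_T$, $m_T$ and $Q_T$ we get
$$
\widetilde\ell_T(a_1\otimes x_1,a_2\otimes x_2)=(p\otimes\id_L)\bigl[i(a_1)\otimes x_1,i(a_2)\otimes x_2\bigr]
=p\bigl(i(a_1)i(a_2)\bigr)\otimes [x_1,x_2],
$$
which equals $m_T(a_1\otimes a_2)\otimes \gamma\circ Q_T(x_1\otimes x_2)$ (up to the Koszul sign that appears identically on both sides).

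For the inductive step, decompose $T=T'\smalltree T''$, with $T'\in\mathscr{PT}_j$ receiving the first $j$ inputs and $T''\in\mathscr{PT}_{k-j}$ the remaining ones. By definition of $\widetilde\ell_T$,
$$
\widetilde\ell_T = (p\otimes\id_L)\circ[\,,\,]\circ\bigl((K\otimes\id_L)\widetilde\ell_{T'}\otimes (K\otimes\id_L)\widetilde\ell_{T''}\bigr),
$$
where the internal $\widetilde\ell_{T'},\widetilde\ell_{T''}$ are computed without the outer $p$ (equivalently, replace their root-label by $\id$). Applying the inductive hypothesis to each of these, together with the factorization of the bracket on $A\cotimes L$ displayed above, separates the computation into: (a) an $A$-component which, by the tree-recursive definition of $m_T$, assembles exactly into $m_T(a_1\otimes\cdots\otimes a_k)$; and (b) an $L$-component which is the nested bracket $[\gamma Q_{T'}(x_1,\dots,x_j),\gamma Q_{T''}(x_{j+1},\dots,x_k)]$, i.e.\ precisely $\gamma\circ Q_T(x_1\otimes\cdots\otimes x_k)$ by the recursive definition of $Q_T$ in \eqref{Q}.

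The one delicate point is to check that all the Koszul signs created by transposing $L$-elements past $A$-elements along the tree agree on the two sides of the claimed equality. This is immediate though: both $m_T$ and $Q_T$ are defined as the identical tree-shaped composition, with $A$-multiplication (resp.\ $L$-bracket) at every internal vertex, and the same linear operators $K$ (resp.\ identity) on every internal edge; hence the sequence of transpositions of homogeneous tensor factors required to evaluate them is the same, and the resulting signs cancel out in the identification $\widetilde\ell_T=m_T\otimes\gamma Q_T$. The only real bookkeeping concern in writing out the full proof is therefore this sign-matching, which follows in a uniform way from the factorized form of the bracket on $A\cotimes L$.
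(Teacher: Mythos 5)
Your proof is correct and is precisely the ``straightforward computation'' the paper has in mind when it states Lemma \ref{lemaainfinito} with the proof omitted: the factorization $[a\otimes x,\,b\otimes y]=(-1)^{|x||b|}ab\otimes[x,y]$ of the bracket on $A\cotimes L$, together with the fact that $i\cotimes\id_L$, $p\cotimes\id_L$, $K\cotimes\id_L$ act only on the $A$-factor, propagated through the tree by induction, is the whole content, with the Koszul signs absorbed into the identification of $(V\otimes L)^{\otimes k}$ with $V^{\otimes k}\otimes L^{\otimes k}$ exactly as you indicate. One cosmetic slip: the finite-type identification $V\cotimes L\cong V\otimes L$ is Remark \ref{remarco1}(i), not Remark \ref{comentario}.
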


\begin{proof}[Proof of Theorem $\ref{main2}$]

We have to prove that $f\in{\Hom_{\cdgl}}(\Qfuntor(V),L)$ if and only if $fs^{-1}_{|_V}=\Psi(z)$ with $z\in\mc(V\otimes L)$.

For it, let $z_i\in V\otimes L$ and let $\Psi(z_i)=g_i\in \Hom(V^\sharp,L)$,  $i=1,\dots,k$ with $k\ge 2$. Then,  it is a straightforward computation using Lemma \ref{lemaainfinito} to show that, for each $T\in \mathscr{PT}_k$,
$$
\Psi\bigl(\widetilde\ell_T(z_1,\dots,z_k)\bigr)=\gamma\circ Q_T\circ(g_1\otimes\dots\otimes g_k)\circ m_T^\sharp.
$$
In particular,  choosing $z_i=z$ for all $i$, with $\Psi(z)=g$, and applying formula (\ref{linfinito}), we get:
$$
\Psi\bigl(\frac{1}{k!}\ell_k(z,\dots,z)\bigr)=\sum_{T\in \mathscr{T}_k}\frac{\gamma\circ Q_T\circ g^{\otimes k}\circ m_T^\sharp}{|\Aut T|},\quad k\ge 2,
$$
while, for $k=1$,         $\Psi(\ell_1z)$  is the usual differential $\partial g-(-1)^{|g|} gd_1$ on $\Hom(V^\sharp,L)$.

Hence, we need to prove that given $f\colon \Qfuntor(V)\to L$ of degree zero and $v\in V$, the following two equations are equivalent:
$$
\begin{aligned}
&\partial f(s^{-1}v)+\sum_{k\ge 1}f(d_ks^{-1}v)=0,\\
&
\partial f(s^{-1}v)+f(d_1s^{-1}v)+\sum_{k\ge 2}\sum_{T\in \mathscr{T}_k}\frac{\gamma\circ Q_T\circ (fs^{-1})^{\otimes k}\circ m_T^\sharp}{|\Aut T|}(v)=0.
\end{aligned}
$$
That is,
$$
\sum_{k\ge 2}f(d_ks^{-1}v)=0\quad\text{if and only if}\quad \sum_{k\ge 2}\sum_{T\in \mathscr{T}_k}\frac{\gamma\circ Q_T\circ (fs^{-1})^{\otimes k}\circ m_T^\sharp}{|\Aut T|}(v)=0.
$$
We finish by checking that, in fact,
$$
fd_ks^{-1}= -\sum_{T\in \mathscr{T}_k}\frac{\gamma\circ Q_T\circ (fs^{-1})^{\otimes k}\circ m_T^\sharp}{|\Aut T|},\quad\text{for any $k\ge 2$.}
$$
On the one hand,   $(fs^{-1})^{\otimes k}\circ m_T^\sharp$ is a map of the form  (\ref{polinomio}) since  $V^\sharp$ is a $C_\infty$-coalgebra.
Hence, by Theorem \ref{Liepolynomial},
$$
\sum_{T\in \mathscr{T}_k}\frac{\gamma\circ Q_T\circ (fs^{-1})^{\otimes k}\circ m_T^\sharp}{|\Aut T|}=  \sum_{T\in \mathscr{PT}_k}\gamma\circ (fs^{-1})^{\otimes k}\circ m_T^\sharp.
$$
On the other hand, the computation in (\ref{util}) replacing $\Delta_T$ by $m_T^{\sharp}$, provides
$$
fd_ks^{-1}=-\sum_{T\in \mathscr{PT}_k}\gamma\circ (fs^{-1})^{\otimes k}\circ m_T^\sharp
$$
\end{proof}

Next, let $L$ be a cDGL and consider the simplicial homotopy retract
\begin{equation}\label{retractoL}
\xymatrix{
 \ar@(ul,dl)@<-5.5ex>[]_{K_\bullet\cotimes\id_L}
 &
  \asu_\bullet\cotimes L \ar@<0.75ex>[r]^-{p_\bullet\cotimes\id_L}
  &
   {C^*(\Delta^\bullet){\otimes} L} \ar@<0.75ex>[l]^-{i_\bullet\cotimes\id_L} }
\end{equation}
induced by $(\ref{retract})$ which, via Theorem \ref{HTT}, provides quasi-isomorphisms of $L_\infty$-algebras,
$$
\xymatrix{
  \asu_\bullet\cotimes L \ar@<0.75ex>[r]^-{P_\bullet}
  &
   {C^*(\Delta^\bullet){\otimes} L} \ar@<0.75ex>[l]^-{I_\bullet}, }
$$
and in particular, simplicial set maps which were proved to be weak homotopy equivalences, first for nilpotent DGL's in \cite{getz}, and then for any ``strict'' $L_\infty$-algebra in \cite[Theorem 5.2.24]{Bandiera},
$$
\xymatrix{
  \mc(\asu_\bullet\cotimes L) \ar@<0.75ex>[r]^-{\mc(P_\bullet)}
  &
   \mc({C^*(\Delta^\bullet){\otimes} L)} \ar@<0.75ex>[l]^-{\mc(I_\bullet)}. }
$$
Then, we have (cf. \cite[Lemma 5.2.26]{Bandiera}):
\begin{corollary}\label{equivalencia} Given any $\cdgl$ $L$,
$$
\mc(C^*(\Delta^\bullet){\otimes} L)\cong \langle L\rangle
 $$
 as simplicial sets.
 \end{corollary}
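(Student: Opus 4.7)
The plan is to apply Theorem \ref{main2} levelwise to the simplicial homotopy retract (\ref{retract}), identify the resulting cDGL with $\lasu_n$, and then verify that the levelwise bijections assemble into a simplicial map.

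At each level $n$, the homotopy retract $(\asu_n, C^*(\Delta^n), i_n, p_n, K_n)$ endows $C^*(\Delta^n)$ with a transferred $C_\infty$-algebra structure via Theorem \ref{HTT}. Since $C^*(\Delta^n)$ is finite dimensional, Remark \ref{remarco1} dualizes this to a $C_\infty$-coalgebra structure on $\Delta^n$, and Theorems \ref{Liepolynomial} and \ref{Quillenconstruction} yield a cDGL $\Qfuntor(\Delta^n) = \libc(s^{-1}\Delta^n)$. As already observed at the end of Section \ref{Quillen}, this cDGL satisfies the Maurer--Cartan and linear-part conditions (1) and (2) characterizing $\lasu_n$, so by the uniqueness statement \cite[Theorem 2.8]{pri} it is canonically isomorphic to $\lasu_n$. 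Theorem \ref{main2} then produces the levelwise bijection
$$\mc(C^*(\Delta^n)\otimes L)\;\cong\;\Hom_{\cdgl}(\Qfuntor(\Delta^n),L)\;=\;\Hom_{\cdgl}(\lasu_n,L)\;=\;\langle L\rangle_n.$$

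The main obstacle I anticipate is promoting this pointwise family to an isomorphism of simplicial sets. The key input is that the retract (\ref{retract}) is simplicial in the sense of \cite{du,du2,getz}: the cofaces and codegeneracies of $\asu_\bullet$ are intertwined with those of $C^*(\Delta^\bullet)$ through $i_\bullet$, $p_\bullet$ and $K_\bullet$. Because the transferred structure formulae (\ref{cinfinito}) and (\ref{ainfinito}) are functorial in the retract data, the induced $C_\infty$-structure on $C^*(\Delta^\bullet)$ is cosimplicial, and dualizing gives a cosimplicial $C_\infty$-coalgebra on $\Delta^\bullet$. Applying the $\infty$-Quillen functor cosimplicially then recovers $\lasu_\bullet$ as a cosimplicial cDGL (this is exactly the alternative construction announced at the end of Section \ref{Quillen}). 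Finally, the bijection of Theorem \ref{main2} is built from the isomorphism $\Psi$ of Remark \ref{remarco1}, the Lie-bracketing map $\gamma$, and the tree sums appearing in (\ref{linfinito}), all of which are natural in $V^\sharp$. This naturality ensures that the levelwise bijections commute with the simplicial operators and assemble into the claimed simplicial isomorphism $\mc(C^*(\Delta^\bullet)\otimes L)\cong\langle L\rangle$.
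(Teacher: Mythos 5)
Your proposal is correct and takes essentially the same route as the paper: the paper's proof simply applies Theorem \ref{main2} levelwise, using the identification $\libc(s^{-1}\Delta^\bullet)=\lasu_\bullet$ already established at the end of Section \ref{Quillen}. The only difference is that you make explicit the simplicial compatibility (naturality of the transferred structures and of the bijection of Theorem \ref{main2}), which the paper leaves implicit.
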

 \begin{proof}
 Simply apply Theorem \ref{main2} to identify the simplicial sets
 $$
\mc(C^*(\Delta^\bullet){\otimes} L)\cong {\Hom_{\cdgl}}(\lasu_\bullet,L).
$$
\end{proof}
 Therefore, we have simplicial maps,
\begin{equation}\label{buenomc}
\xymatrix{
  \mc(\asu_\bullet\cotimes L) \ar@<0.75ex>[r]^-{\mc(P_\bullet)}
  &
   \langle L\rangle \ar@<0.75ex>[l]^-{\mc(I_\bullet)}, }
\end{equation}
relating our realization with the Hinich ``contents or nerve" of $L$ \cite{hi}.  The following is Theorem \ref{principal}.

\begin{theorem}\label{principal2} The maps $\mc(I_\bullet)$ and $\mc(P_\bullet)$ are homotopy equivalences which make  $\langle L\rangle$ a strong deformation retract of $\mc(\asu_\bullet{\cotimes} L)$.
\end{theorem}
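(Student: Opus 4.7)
The plan is to use the Dupont retract (\ref{retractoL}) together with the tree calculus of Section 1 to verify the strong deformation retract conditions directly. Throughout, I would exploit the Dupont side conditions $K_\bullet\circ i_\bullet=0$, $p_\bullet\circ K_\bullet=0$, $K_\bullet^2=0$ which hold for the Whitney--Dupont contraction in (\ref{retract}).

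First I would establish $\mc(P_\bullet)\circ\mc(I_\bullet)=\mathrm{id}_{\langle L\rangle}$. By functoriality of $\mc$ this reduces to showing that $P_\bullet\circ I_\bullet=\mathrm{id}$ as strict $L_\infty$-morphisms. Each component $(P_\bullet\circ I_\bullet)_k$ is given by the HTT tree formula as a sum over planar trees with $k$ leaves: the root labelled by $p_\bullet\cotimes\mathrm{id}_L$, each internal edge by $K_\bullet\cotimes\mathrm{id}_L$, each internal vertex by the bracket, and the leaves by $i_\bullet\cotimes\mathrm{id}_L$. Any tree with at least one internal vertex carries an internal edge pinched between an $i_\bullet$-leaf and either another internal vertex or the $p_\bullet$-root, and the Dupont side conditions kill all such contributions. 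Only the trivial one-leaf tree survives, contributing $p_\bullet i_\bullet\cotimes\mathrm{id}_L=\mathrm{id}$.

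Next I would construct a simplicial homotopy $H\colon\Delta[1]\times\mc(\asu_\bullet\cotimes L)\to\mc(\asu_\bullet\cotimes L)$ from $\mathrm{id}$ to $\mc(I_\bullet)\circ\mc(P_\bullet)$. Using the standard identification of simplicial MC-homotopies with cylinder MC-elements, the problem reduces to lifting each $\alpha\in\mc(\asu_n\cotimes L)$ to an element $\widetilde\alpha(t,dt)\in\mc\bigl((\asu_n\cotimes L)\cotimes\Lambda(t,dt)\bigr)$ with endpoints $\widetilde\alpha|_{t=0}=I_nP_n\alpha$ and $\widetilde\alpha|_{t=1}=\alpha$. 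I would build $\widetilde\alpha$ inductively along the lower central series of $L$, solving at each stage the transport ODE prescribed by the MC equation and integrating by means of the contracting homotopy $K_\bullet\cotimes\mathrm{id}_L$; completeness of $L$ guarantees that the inductive series converges. This essentially repackages the perturbation-lemma argument behind Bandiera's weak equivalence \cite[Theorem 5.2.24]{Bandiera}, and naturality of the construction equips $H$ with the required simplicial structure.

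Finally, I would verify that $H$ is stationary on the image of $\mc(I_\bullet)$: if $\alpha=I_n\beta$ lies in that image, then $K_\bullet\circ i_\bullet=0$ makes the right-hand side of the transport equation vanish at every inductive stage, so the unique lift is the constant path $\widetilde\alpha(t)\equiv\alpha$. The main obstacle I anticipate is the second step: writing the transport equation with enough care that its inductive solution is simultaneously a genuine MC element, compatible with the simplicial face and degeneracy operators, and convergent with respect to the filtration of $L$. The first and third steps are largely formal once the Dupont conditions and the tree calculus of Section 1 (in particular Theorem \ref{Liepolynomial}) are in hand.
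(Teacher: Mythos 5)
Your first step is correct but heavier than necessary: Theorem \ref{HTT} as stated already yields $P_\bullet I_\bullet=\id$ on the nose, hence $\mc(P_\bullet)\mc(I_\bullet)=\id_{\langle L\rangle}$, with no need for a tree-by-tree cancellation via the Dupont side conditions (and, as an aside, the composite of the two $\infty$-morphisms is not literally the single tree sum you write down). The genuine gap is in your second step, which is where all the content of the theorem lives. Producing for each $\alpha\in\mc(\asu_n\cotimes L)$ a cylinder Maurer--Cartan element in $\mc\bigl((\asu_n\cotimes L)\cotimes\Lambda(t,dt)\bigr)$ joining $\alpha$ to $I_nP_n\alpha$ only gives a path in each component of each MC space; it does not by itself assemble into a simplicial homotopy $\Delta[1]\times\mc(\asu_\bullet\cotimes L)\to\mc(\asu_\bullet\cotimes L)$. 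Such a homotopy amounts to operators $h_i\colon\mc(\asu_n\cotimes L)\to\mc(\asu_{n+1}\cotimes L)$ satisfying the simplicial homotopy identities, and your inductive ODE solution involves choices at every stage of the filtration; the phrase ``naturality of the construction equips $H$ with the required simplicial structure'' is precisely what must be proved, and your stationarity claim on the image of $\mc(I_\bullet)$ depends on the exact shape of a transport equation you never write down. Finally, saying that the step ``repackages the perturbation-lemma argument behind Bandiera's weak equivalence'' is not an argument: \cite[Theorem 5.2.24]{Bandiera} gives a weak homotopy equivalence, not an explicit simplicial strong deformation retraction, so you are either deferring the entire construction (essentially the content of \cite{getz2} and \cite[\S 5.2]{Bandiera}) or assuming what you set out to build.

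For comparison, the paper avoids constructing any explicit homotopy. It quotes Theorem \ref{HTT} for $\mc(P_\bullet)\mc(I_\bullet)=\id_{\langle L\rangle}$, notes that both $\langle L\rangle$ and $\mc(\asu_\bullet\cotimes L)$ are Kan complexes, and reduces to showing that $\mc(I_\bullet)$ is a weak homotopy equivalence, after which standard simplicial homotopy theory upgrades the one-sided retraction to a strong deformation retract. The weak equivalence is checked componentwise: $\pi_0$ of both sides is $\widetilde{\mc}(L)$, and on the component of $z\in\mc(L)$ one passes to the localization $L^{(z)}$, invokes the isomorphisms $H_{n-1}(L^{(z)})\cong\pi_n\langle L^{(z)}\rangle$ from \cite{pri} and $H_{n-1}(L^{(z)})\cong\pi_n\mc(\asu_\bullet\cotimes L^{(z)})$ from \cite{ber}, and checks compatibility by the single computation $\mc(I_n)(\alpha_{0\dots n}\otimes\Phi)=\omega_{0\dots n}\otimes\Phi$, which holds because the higher Taylor coefficients of $I_n$ vanish on $\alpha_{0\dots n}\otimes\Phi$ (any product of $\omega_{0\dots n}$ with itself is zero). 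If you wish to keep your explicit-homotopy strategy you must carry out the simplicial compatibility and stationarity in full (in effect reproving the retraction onto Getzler's nerve); otherwise the homotopy-group comparison above is the shorter path.
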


 \begin{proof}Theorem \ref{HTT} provides $\mc(P_\bullet)\mc( I_\bullet)=\id_{\langle L\rangle}$. Also note that both $\langle L\rangle$ and $\mc(\asu_\bullet{\cotimes} L)$ are Kan complexes, see \cite[Proposition 7.3]{se} and \cite{getz} respectively. We then finish by showing that $\mc(I_\bullet)$ of (\ref{buenomc}) is a weak homotopy equivalence.

 On the one hand, both $\pi_0\langle L\rangle$ and  $\pi_0\mc(\asu_\bullet\cotimes L)$ coincide with the set $\widetilde{\mc}(L)$ of Maurer-Cartan elements of $L$ modulo the gauge relation, see \cite[Proposition 4.4]{pri} and \cite[Introduction]{getz} respectively. Now, for any $z\in\mc(L)$ consider the ``localization'' of $L$ at $z$ which is the cDGL
 $$
 L^{(z)}=(L,\partial_z)/(L_{<0}\oplus M)
 $$
 in which $\partial_z=\partial+\ad_z$ is the original differential on $L$ perturbed by the adjoint on $z$, and $M$ is  a complement of $\ker \partial_z$ in $L_0$. Then \cite[Proposition 4.6]{pri}, the path component $\langle L\rangle_z$ of  $\langle L\rangle$ at $z\in\widetilde{\mc}(L)$ is precisely $\langle L^{(z)}\rangle$. Moreover \cite[Proposition 4.5]{pri}, and this is valid for any non-negatively graded cDGL,  for any $n\ge 1$, there is a natural group isomorphism,
 $$
 \varphi\colon H_{n-1}(L^{(z)})\stackrel{\cong}{\longrightarrow}\pi_n \langle L^{(z)}\rangle,\qquad \varphi[\Phi]=\overline f,
 $$
  where this is the homotopy class represented by the the cDGL morphism $f\colon \libc(s^{-1}\Delta^n)\to L^{(z)}$ which takes the top generator $s^{-1}a_{0\dots n}$   to $\Phi$, and is zero on the rest of generators. For completeness, we remark that for $n=1$ the group structure on $H_0(L^{(z)})$ is given by the Baker-Campbell-Hausdorff formula. Composing $\varphi$ with the morphism induced in homotopy groups by
 the isomorphism in Corollary \ref{equivalencia} we obtain a group isomorphism:
 $$
 \psi\colon H_{n-1}(L^{(z)})\stackrel{\cong}{\longrightarrow}\pi_n \mc(C^*(\Delta^\bullet){\otimes} L^{(z)}),\quad \psi[\Phi]=\overline{\alpha_{0\dots n}{\otimes}\Phi}.
 $$
On the other hand, in \cite[Corollary 1.3]{ber} (cf.  \cite[Proposition 7.20]{lamar}) it is proved that the path component $\mc(\asu_\bullet{\cotimes} L)_z$ of $\mc(\asu_\bullet{\cotimes} L)$ at $z\in\widetilde{\mc}(L)$ is precisely $\mc(\asu_\bullet{\cotimes} L^{(z)})$. Moreover \cite[Theorem 1]{ber} (see \S4 of this reference for details), for each $n\ge 1$, there is a group isomorphism
$H_{n-1}(L^{(z)})\cong \pi_n\mc(\asu_\bullet{\cotimes} L^{(z)})$ which sends the homology class $[\Phi]$ to $\overline{\omega_{0\dots n}{\otimes} \Phi}$. We finish by checking that, for each $n\ge 1$, the induced map at the component determined by  $z$,
$$
\mc(I_n)\colon \mc(C^*(\Delta^n){\otimes} L^{(z)})\to \mc(\asu_n{\cotimes} L^{(z)}),
$$
satisfies $\mc(I_n)(\alpha_{0\dots n}{\otimes}\Phi)=\omega_{0\dots n}{\otimes} \Phi$.

As a general fact,
$$
\mc(I_n)(x)=\sum_k\frac{1}{k!}I_n^{(k)}(x,\dots,x)
$$
 being $\{I_n^{(k)}\}_{k\ge 1}$ the Taylor series of the $L_\infty$-morphism $I_n$.  Theorem \ref{HTT} gives an explicit recursive description of this series: $I_n^{(1)}=i_n$ and
$$
I_n^{(k)}(x_1,...,x_k)=\sum_{i=1}^{k-1}\sum_{\sigma \in \widetilde S(i, k-i)}\varepsilon(\sigma) K[I_i(x_{\sigma(1)},...,x_{\sigma(i)}),I_{k-i}(x_{\sigma(i+1)},...,x_{\sigma(k)})]
$$
in which $K$ is the chain homotopy coming from $(\ref{retract})$ and $\widetilde S(i, k-i)$ denote the shuffle permutations which fix $1$.

In our particular case, as any power of $\omega_{0\dots n}$ vanishes, and $i_n(\alpha_{0\dots n})=\omega_{0\dots n}$, it follows that
$$
I_n^{(1)}(\alpha_{0\dots n}{\otimes}\Phi)=\omega_{0\dots n}{\otimes} \Phi,\quad I_n^{(k)}(\alpha_{0\dots n}{\otimes}\Phi,\dots,\alpha_{0\dots n}{\otimes}\Phi)=0,\,\, k\ge 2,
$$
and the proof is complete.
\end{proof}

Recall that the inclusion $\gamma_\bullet(L)\hookrightarrow \mc_\bullet(\asu\cotimes L)$ of the {\em nerve} of the cDGL $L$ \cite[\S5]{getz}, defined as,
$$
\gamma_\bullet(L)=\{x\in\mc_\bullet(\asu\cotimes L),\,\,(K\widehat\otimes\id_L)_\bullet(x)=0\},
$$
is a homotopy equivalence \cite[Corollary 5.9]{getz} which makes  the nerve of $L$ a simplicial set more tractable that the Getzler-Hinich realization. Here we show,

\begin{theorem}\label{final} The realization $\langle L\rangle$ of any cDGL is isomorphic to $\gamma_\bullet(L)$ as simplicial sets. 
\end{theorem}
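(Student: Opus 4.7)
The plan is to refine the homotopy equivalence $\mc(I_\bullet)\colon \langle L\rangle \to \mc(\asu_\bullet\cotimes L)$ of Theorem \ref{principal2} (obtained under the identification $\langle L\rangle \cong \mc(C^*(\Delta^\bullet)\otimes L)$ of Corollary \ref{equivalencia}) into a simplicial isomorphism between $\langle L\rangle$ and $\gamma_\bullet(L)$. Concretely, I would show that $\mc(I_\bullet)$ factors through the inclusion $\gamma_\bullet(L)\hookrightarrow \mc(\asu_\bullet\cotimes L)$ and that the corestriction is bijective, with inverse given by the restriction of $\mc(P_\bullet)$.

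For the factorization, recall that the Dupont contraction (\ref{retract}) satisfies the classical side conditions $K_n i_n = 0$ and $K_n^2 = 0$, and these are inherited by $K_n\cotimes\id_L$ on $\asu_n\cotimes L$. The explicit HPT recursion for the Taylor coefficients $I_n^{(k)}$ recorded in the proof of Theorem \ref{principal2} gives $I_n^{(1)} = i_n\cotimes\id_L$ and, for $k\ge 2$, an expression in which $K_n\cotimes\id_L$ is the outermost operator. Hence $(K_n\cotimes\id_L)\circ I_n^{(k)} = 0$ for every $k\ge 1$, and summing the Taylor series yields $(K_n\cotimes\id_L)\bigl(\mc(I_n)(x)\bigr) = 0$ for every $x\in\mc(C^*(\Delta^n)\otimes L)$. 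Therefore $\mc(I_\bullet)$ indeed lands in $\gamma_\bullet(L)$.

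The resulting map $\langle L\rangle\to \gamma_\bullet(L)$ is injective because $\mc(P_\bullet)\circ\mc(I_\bullet) = \id$, as already used in Theorem \ref{principal2}. For surjectivity, given $y\in\gamma_n(L)$, I would set $x=\mc(P_n)(y)$ and prove $\mc(I_n)(x) = y$ by a fixed-point argument. Writing $D$ for the total differential of $\asu_n\cotimes L$ and combining the contraction identity $D(K_n\cotimes\id_L) + (K_n\cotimes\id_L)D = (i_n p_n\cotimes\id_L) - \id$ with $(K_n\cotimes\id_L)(y) = 0$ and the Maurer-Cartan equation $Dy = -\tfrac12 [y,y]$, one obtains the fixed-point equation
$$
y = (i_n p_n\cotimes\id_L)(y) - \tfrac12 (K_n\cotimes\id_L)[y,y].
$$
Iterating this identity in the completion (using $[L^a,L^b]\subset L^{a+b}$ and $L=\varprojlim_m L/L^m$, so each iteration raises the filtration degree) produces a convergent expansion that, organized by trees, reproduces term by term the recursive formula for $\mc(I_n)(\mc(P_n)(y))$. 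The main obstacle I anticipate is precisely this final matching: identifying the iterated fixed-point expansion of a generic $y\in\gamma_n(L)$ with the tree-indexed Taylor expansion of $\mc(I_n)$, and verifying that the nested $(K_n\cotimes\id_L)$-operators align correctly. Once that bookkeeping is in place, $\mc(I_\bullet)$ and $\mc(P_\bullet)$ restrict to mutual inverses between $\langle L\rangle$ and $\gamma_\bullet(L)$, and the theorem follows.
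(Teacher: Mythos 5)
Your overall route starts the same way as the paper's: identify $\langle L\rangle$ with $\mc(C^*(\Delta^\bullet)\otimes L)$ via Corollary \ref{equivalencia} and then compare with the nerve. Where the paper finishes in one line by invoking \cite[Theorem 1]{getz2} or \cite[Proposition 5.2.7]{Bandiera} --- which state precisely that, for the Dupont retract (\ref{retractoL}), $\gamma_\bullet(L)\cong \mc(C^*(\Delta^\bullet)\otimes L)$ --- you attempt to prove that identification from scratch. The first half of your argument is sound: the Dupont contraction does satisfy the side conditions $K_ni_n=0$, $K_n^2=0$ (and $p_nK_n=0$), these pass to $K_n\cotimes\id_L$, and since every Taylor coefficient $I_n^{(k)}$, $k\ge 2$, has $K_n\cotimes\id_L$ outermost while $I_n^{(1)}=i_n\cotimes\id_L$, one gets $(K_n\cotimes\id_L)\circ\mc(I_n)=0$, so $\mc(I_\bullet)$ lands in $\gamma_\bullet(L)$; injectivity from $\mc(P_\bullet)\mc(I_\bullet)=\id$ is likewise correct.

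The genuine gap is the surjectivity step, which is exactly the substance of the cited theorem and which you leave as acknowledged ``bookkeeping.'' Two points are missing. First, the fixed-point identity $y=(i_np_n\cotimes\id_L)(y)-\tfrac12(K_n\cotimes\id_L)[y,y]$ expands $y$ in terms of $p_n(y)$, not of $x=\mc(P_n)(y)$ as you set up; so what the iteration could prove is $y=\mc(I_n)\bigl(p_n(y)\bigr)$, and you must additionally show that $p_n(y)$ is a Maurer--Cartan element of the transferred $L_\infty$-structure on $C^*(\Delta^n)\otimes L$ (equivalently, that $\mc(P_n)(y)=p_n(y)$ on $\gamma_n(L)$, which is not automatic: the higher Taylor coefficients of $P_n$ do not visibly vanish on $y$ merely because $(K_n\cotimes\id_L)y=0$; a posteriori it follows from the bijection itself). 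Second, the term-by-term identification of the iterated fixed-point expansion with the tree-indexed Taylor series of $\mc(I_n)$, together with uniqueness of the solution of the fixed-point equation in the complete filtration, is the whole combinatorial content of homotopical perturbation theory for Maurer--Cartan elements; without carrying it out you have re-stated, not proved, the key input. So either perform this matching in detail (essentially reproving \cite[Theorem 1]{getz2}) or, as the paper does, quote that result together with Corollary \ref{equivalencia}, which immediately yields Theorem \ref{final}.
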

\begin{proof} This is an immediate consequence of Corollary \ref{equivalencia} and \cite[Theorem 1]{getz2} or  \cite[Proposition 5.2.7]{Bandiera}  which, for the particular homotopy retract in (\ref{retractoL}), reads that $ \gamma_\bullet(L)\cong  \mc(C^*(\Delta^\bullet){\otimes} L)$.
\end{proof}

{\small
\vspace{5mm}\noindent {\sc Departamento de Algebra, Geometr\'ia y Topolog\'ia,
Universidad de M\'alaga,
Ap. 59,
29080-M\'alaga,
Espa\~na}

{\it E-mail address:} ubuijs@uma.es
\\[2mm]
{\sc  Institut de Math\'ematiques et Physique,
Universit\'e Catholique de Louvain-la-Neuve,
Louvain-la-Neuve,
Belgique}

{\it E-mail address:}  Yves.felix@uclouvain.be
\\[2mm]
{\sc Departamento de Algebra, Geometr\'ia y Topolog\'ia,
Universidad de M\'alaga,
Ap. 59,
29080-M\'alaga,
Espa\~na}

{\it E-mail address:} aniceto@uma.es
\\[2mm]{\sc D\'epartement de Mathematiques,
         UMR 8524,
         Universit\'e de Lille~1,
         59655 Villeneuve d'Ascq Cedex,
         France }

         {\it E-mail address:} Daniel.tanre@univ-lille1.fr}

\end{document}